\theoremstyle{plain}
\newtheorem{theo}{Theorem}[section]
\newtheorem{lm}[theo]{Lemma}
\newtheorem*{q}{Question}
\newtheorem*{ack}{Ackowledgements}
\newtheorem*{notat}{Notation}
\newtheorem{cor}[theo]{Corollary}
\newtheorem*{example*}{Example}
\newtheorem*{Organisation}{Organisation}
\DeclareMathOperator{\Ha}{H} 
\DeclareMathOperator{\conv}{conv} 
\DeclareMathOperator{\PW}{PW}
\DeclareMathOperator{\supp}{supp} 
\DeclareMathOperator{\dist}{dist} 
\DeclareMathOperator{\sgn}{sgn} 
\DeclareMathOperator{\diag}{diag} 
\title{Nehari's Theorem and Hardy's inequality for Paley--Wiener spaces}
\author{Konstantinos Bampouras}
\address{Department of Mathematical Sciences, Norwegian University of Sciences and Technology (NTNU), NO-7491 Trondheim, Norway}
\email{konstantinos.bampouras@ntnu.no}
\date{\today}
\begin{document}
	\keywords{Hankel operators, Nehari theorem, Hardy's inequality, Paley--Wiener spaces, Schatten class.}
	
	\subjclass[2020]{47B35, 47B10}
	
	\maketitle
	\begin{abstract}
		Recently it was proven that for a convex subset of $\mathbb{R}^{n}$ that has infinitely many extreme vectors, the Nehari theorem fails, that is, there exists a bounded Hankel operator $\Ha_{\phi}$ on the Paley--Wiener space $\PW(\Omega)$ that does not admit a bounded symbol. In this paper we examine whether Nehari's theorem can hold under the stronger assumption that the Hankel operator $\Ha_{\phi}$ is in the Schatten class $S^{p}(\PW(\Omega))$. We prove that this fails for $p>4$ for any convex subset of $\mathbb{R}^{n}$, $n\geq2$, of boundary with a $C^{2}$ neighborhood of nonzero curvature. Furthermore we prove that for a polytope $P$ in $\mathbb{R}^{n}$, the inequality $$\int_{2P}\dfrac{|\widehat{f}(x)|}{m(P\cap (x-P))}dx\leq C(P)\|f\|_{L^{1}},$$ holds for all $f\in \PW^{1}(2P)$, and consequently any Hilbert--Schmidt Hankel operator on a Paley--Wiener space of a polytope is generated by a bounded function.
	\end{abstract}
	\section{Introduction} 
	For a convex set $\Omega$ in $\mathbb{R}^{n}$, we define the Paley--Wiener space with respect to $\Omega$ to be the closed subspace of $L^{2}(\mathbb{R}^{n})$ with Fourier transform supported in $\Omega$, i.e. 
	$$\PW(\Omega)=\{f\in L^{2}(\mathbb{R}^{n}):\supp\widehat{f}\subset \Omega\},$$ where the Fourier is defined on $L^1(\mathbb{R}^{n})$ functions as $$\widehat{f}(x)=\int_{\mathbb{R}^n}f(\xi)e^{-2\pi i\langle x,\xi\rangle}d\xi,$$ and admits a well-known extension to the space of tempered distributions. Since the boundary of a convex set has measure zero, it makes no difference whether we consider convex sets to be open or closed. When $\Omega$ is bounded and convex, $\PW(\Omega)$ consists precisely of the Fourier transforms of entire functions on $\mathbb{C}^n$ of suitable exponential type, restricted to $\mathbb{R}^n$ see \cite{MR1451142}. We are interested in the study of Hankel operators defined on these spaces. The classical Hankel operators are operators on the Hardy space of the unit disc $H^{2}(\mathbb{T})$ with matrices of fixed anti-diagonals with respect to the basis $\{z^{d},d=0,1,2,...\}$. For information about Hardy spaces and classical Hankel operators the reader can refer to \cite{MR1892647}. These operators can be obtained by $L^{2}(\mathbb{T})$ functions, i.e for a Hankel operator $H$ there exists an $L^{2}(\mathbb{T})$ function $\phi$, not necessarily unique, which is called a symbol of $H$, such that the operator has the form $H(f)=P(\overline{f^\ast} \phi)$, where $P$ is the Riesz projection and $f^\ast(z)=\overline{f}(\overline{z})$. We note that $f^{\ast}$ can also be defined by the property $\widehat{f^\ast}=\overline{\widehat{f}},$ which will be helpful when we will define Hankel operators for Paley--Wiener spaces. The operator properties of Hankel operators have been well studied with respect to their symbol, for example compactness \cite{MR108684}, finite rank \cite{MR0237286} and Schatten classes \cite{MR1949210, MR674875}. A very famous result for these operators is a theorem of Z. Nehari \cite{MR82945}, that states that a Hankel operator $H:H^2(\mathbb{T})\to H^2(\mathbb{T})$ is bounded if and only if it admits a bounded symbol, i.e. there is a bounded function $\phi\in L^{\infty}(\mathbb{T})$ such that $H$ has matrix $(\widehat{\phi}(n+m))_{n,m\geq 0}$. More specifically, Nehari showed that there is a bounded function $\phi\in L^{\infty}(\mathbb{T})$ such that $\|H\|=\|\phi\|_{L^\infty}$.
	\par As in the classical case, we define the Hankel operators on $\PW(\Omega)$ as follows. Let $\widehat{\phi}$ be a distribution and let $\Ha_\phi$ be the densely defined operator given by 
	$$\widehat{\Ha_{\phi}(f)}(x)=\widehat{P_{\Omega}(\overline{f^{\ast}}\phi)}(x)=\int_{\Omega} \widehat{\phi}(x+y)\widehat{f}(y)dy, \quad x\in \Omega,$$
	where $P_{\Omega}$ is the projection of $L^{2}(\mathbb{R}^{n})$ onto $\PW(\Omega)$ and $f^{\ast}$ satisfies $\widehat{f^{\ast}}=\overline{\widehat{f}}$. It is evident that $\|\Ha_{\phi}\|\leq \|\phi\|_{L^{\infty}}$, thus every bounded function generates a bounded Hankel operator. Since the Hardy spaces of the disc and of the upper half plane are closely related \cite[Chapter 6]{MR1892647}, the results of the disc can be transferred to the real half line $\PW(\mathbb{R}_{+})$ and show that the analogue of Nehari's theorem holds. R. Rochberg \cite{MR878246} translated these results onto the real line segment $\PW(-1,1)$. However, O. F. Brevig and K.--M. Perfekt \cite{MR4502777} showed that for the disc the analogous result does not hold. It was later shown \cite{MR4700194} that the same can be said for all convex sets with infinitely many extreme vectors, and more specifically for all bounded convex sets that are not polytopes. Since the Schatten classes $S^{p}$ are an increasing family of spaces with respect to $p\geq 1$, and are contained in the space of bounded operators, we are interested in the following weaker version of Nehari's theorem, which was asked in \cite{MR4502777}. 
	\begin{q}
	For which values of $p\in [1,\infty)$ does every Hankel operator in the Schatten class $S^p(\PW(\Omega))$ admit a bounded symbol?
	\end{q}  
	We will say that Nehari's theorem for $\Omega$ holds (or fails) for some $p\in [1,\infty)$ if the question above is positive (or negative) for that $p$. If $\Omega$ is strictly convex set, then Nehari's theorem holds for $\Omega$ for $p=1$, \cite{bampouras2024besovspacesschattenclass}. For larger values of $p$ we will prove the following negative answer.
	
	\begin{theo}\label{ball}
		Suppose that $\Omega$ is a convex subset of $\mathbb{R}^{n}$, $n\geq 2$ that has a $C^{2}$ boundary neighborhood with nonzero curvature. Then Nehari's theorem fails for $\Omega$ for all $p>4$.
	\end{theo}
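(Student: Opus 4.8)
The plan is to construct, for each fixed $p>4$, a Hankel operator $\Ha_{\phi}\in S^{p}(\PW(\Omega))$ with no bounded symbol, with $\widehat{\phi}$ supported in an arbitrarily small cap of $2\Omega$ about $2\xi_{0}$, where $\xi_{0}\in\partial\Omega$ is a point of nonzero curvature lying in the $C^{2}$ piece. First I would localise: an affine change of coordinates induces a bounded invertible identification of the relevant Paley--Wiener spaces and conjugates Hankel operators to Hankel operators, hence preserves Schatten classes and the property of admitting a bounded symbol, so one may assume that near $2\xi_{0}=0$ the set $2\Omega$ agrees, up to a $C^{2}$ error, with the epigraph of a positive definite quadratic form. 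Because $\widehat{\phi}$ lives there, only this local model is relevant, and the one geometric fact extracted from it is the estimate $m\big(\Omega\cap(x-\Omega)\big)\asymp \dist(x,\partial(2\Omega))^{(n+1)/2}$ for $x\in 2\Omega$ near $0$, the volume of the intersection of two paraboloids tangent to first order.

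For the symbol I would take $\widehat{\phi}=\sum_{k}\widehat{\phi}_{k}$, $\widehat{\phi}_{k}$ supported on the dyadic boundary strip $R_{k}=\{x:\dist(x,\partial(2\Omega))\asymp 2^{-k}\}\cap U$, of amplitude $\asymp 2^{k\beta}$ for a carefully chosen $\beta\ge1$, and carrying two oscillatory factors: a nonlinear normal phase, whose sole purpose is to turn the (non-integrable) amplitude into a well-defined tempered distribution, and a chirp-type tangential phase used to adjust the size of the operator. To see $\Ha_{\phi}\in S^{p}$ for $p>4$ I would decompose each $\widehat{\phi}_{k}$ over the $\asymp 2^{k(n-1)/2}$ parabolic caps tiling $R_{k}$: the associated cap operators have pairwise disjoint ranges and domains (their non-vanishing is confined to disjoint pieces of a thin collar of $\partial\Omega$), so the $S^{p}$ quasinorm is additive over caps at a fixed scale; each cap operator is controlled by the Hilbert--Schmidt identity $\|\Ha_{\widehat{\phi}\mathbf 1_{B}}\|_{S^{2}}^{2}=\int_{B}|\widehat{\phi}|^{2}\,m(\Omega\cap(x-\Omega))\,dx$ together with an uncertainty-principle bound on its effective rank, namely the tangential bandwidth of the chirp against the size of the cap. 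Summing over caps, and then over scales by the triangle inequality in $k$, reduces the whole estimate to a geometric series whose convergence range is exactly $p>4$; this is how the exponent $4$ appears, as a balance among the normal decay $(n+1)/2$, the cap count $2^{k(n-1)/2}$, the chirp bandwidth and $\beta$.

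For the absence of a bounded symbol I would use the standard duality — a Hankel operator has a bounded symbol if and only if its symbol distribution defines a bounded functional on $\PW^{1}(2\Omega)$, which is the framework underlying the Hardy inequality of the abstract — so the claim becomes the failure of a weighted Hardy-type inequality with weight $|\widehat{\phi}|$. Concretely I would exhibit $h_{N}\in\PW^{1}(2\Omega)$ band-limited near the curved point, with Fourier transform adapted to the amplitude and the oscillation of $\widehat{\phi}$, for which $\langle\phi,h_{N}\rangle$ grows geometrically while $\|h_{N}\|_{L^{1}}$ is estimated through the inverse Fourier transform of a chirp-modulated smooth density on a thin curved strip; the exponents are calibrated so that this growth beats the curvature-induced loss in the last estimate at precisely the same $\beta$ at which the Schatten series converges for $p>4$. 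In particular this also shows that, unlike for polytopes, the Hardy inequality of the abstract fails for curved $\Omega$.

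I expect the main obstacle to be this last step — quantifying the failure of the weighted Hardy inequality, i.e. producing the near-extremisers $h_{N}$ and bounding their $L^{1}$ norms. Whereas for a polytope that inequality holds and every symbol dominated by $1/m(\Omega\cap(x-\Omega))$ is automatically a bounded one, here nonzero curvature is indispensable and enters through oscillatory-integral and stationary-phase estimates for Fourier transforms of densities on thin curved strips, the same curvature loss familiar from Fourier restriction and Bochner--Riesz theory. Getting this loss sharp, reconciling it with the requirements that $\widehat{\phi}$ be a genuine distribution and that $\langle\phi,h_{N}\rangle$ carry no cancellation, and verifying the almost-orthogonality invoked in the Schatten estimate (a Cotlar--Stein argument for the blocks $\Ha_{\widehat{\phi}_{k}}$) are the delicate points; once the construction and its two critical exponents are in place, the rest should be routine bookkeeping.
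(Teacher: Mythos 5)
Your plan takes a genuinely different and much heavier route than the paper, and the places you defer as ``delicate points'' are exactly where the content of the theorem lives. The paper never builds a single infinite-sum symbol at all: it first shows (Lemma \ref{implNehari}, a closed graph/open mapping argument on the quotient $L^{\infty}(\mathbb{R}^n)/\sim$) that if Nehari's theorem held in $S^{p}$ there would be a \emph{uniform} constant with $\inf\{\|\psi\|_{L^\infty}:\widehat{\psi}|_{2\Omega}=\widehat{\phi}|_{2\Omega}\}\leq C\|\Ha_{\phi}\|_{S^{p}}$, and then violates this with, for each $\epsilon$, a finite sum $\psi=\sum_{i=1}^{N}\phi_{i}$ of \emph{non-oscillating} bumps supported on balls of radius $\approx\epsilon^{2}$ sitting at depth $\approx\epsilon^{2}$ under $N\approx\epsilon^{1-n}$ boundary points. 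Curvature enters only through Lemma \ref{disc}: the shadows $D_{\phi_{i}}=\Omega\cap(\supp\widehat{\phi}_{i}-\Omega)$ are disjoint caps of size $\epsilon$, so the blocks are \emph{exactly} orthogonal (no Cotlar--Stein), each block's $S^{p}$ norm is bounded by Russo's mixed-norm estimate $\|\widehat{\phi}_{i}\,\omega_{\Omega}^{1/p}\|_{L^{p'}}$, the pairing is just $\langle\psi,\psi\rangle=\|\widehat{\psi}\|_{L^{2}}^{2}$, and $\|\psi\|_{L^{1}}\lesssim N^{(n+d)/(n+2d)}$ by Cauchy--Schwarz plus tail decay. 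The ratio comes out as $\epsilon^{(n-1)(2/p-1/2)}$, which diverges precisely for $p>4$. No chirps, no stationary phase, no near-extremisers $h_{N}$ are ever needed.

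The concrete gaps in your proposal. First, the exponent $4$ is asserted, not derived: the ``balance among the normal decay, the cap count, the chirp bandwidth and $\beta$'' is the entire theorem, and you do not carry it out. Second, your Schatten mechanism is not valid as stated: for $p>2$ one has $\|T\|_{S^{p}}\leq\|T\|_{S^{2}}$ unconditionally, and plugging the Hilbert--Schmidt identity into the paper's scheme block by block gives a ratio that tends to $0$, not $\infty$; to beat $S^{2}$ you must prove an operator-norm upper bound (equivalently, a \emph{lower} bound on the effective rank) for each cap block and interpolate via $\|T\|_{S^{p}}\leq\|T\|_{S^{2}}^{2/p}\|T\|^{1-2/p}$ --- this is exactly the oscillatory-integral work you postpone, and it is what the one-line Russo bound replaces. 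Third, your ``in particular this also shows the Hardy inequality fails for curved $\Omega$'' overreaches: the paper explicitly leaves the range $d\in[\tfrac{2}{n+1},1]$ open for curved sets, and failure of Nehari for $p>4$ is perfectly consistent with Hardy's inequality (which only yields Nehari for $p=2$); that your bookkeeping produces a statement the paper cannot is a warning sign. Fourth, to conclude that your single symbol $\phi=\sum_{k}\phi_{k}$ has no bounded symbol you must show $\langle\phi,h_{N}\rangle/\|h_{N}\|_{L^{1}}\to\infty$ while controlling the contributions of all scales $k\neq N$; you flag this but do not resolve it, whereas the closed-graph reduction makes it unnecessary to ever leave a single finite scale.
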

    By $C^{2}$ boundary neighborhood with nonzero curvature, we mean that there is an open ball $B$, such that $\partial \Omega\cap B$ is a $C^2$ manifold of strictly positive Gaussian. What we will actually need in our proof is the following: for each $x\in\partial \Omega\cap B$ we can find continuous radii $r_1(x),r_2(x)>0$ and balls $B_1(r_1(x)),B_2(r_2(x))$ containing $x$ at their boundary such that $B_1(r_1(x))\subset \Omega$ and $\partial\Omega\cap B\subset B_2(r_2(x))$.
    \par It is worth noting that the Schatten classes of Hankel operators on Paley--Wiener spaces have been studied by L. Peng \cite{MR953994, MR1001657}, and recently by the author of this work and K.--M. Perfekt \cite{bampouras2024besovspacesschattenclass}, and have been related to Besov spaces. It is not the object of study of this note.
	\par An interesting question is whether Nehari's theorem holds for $p=2$. In the case of the Hardy spaces in $\mathbb{T}^n$, it is not known whether Nehari's theorem holds, even though there have been noteworthy attempts \cite{MR1961195,MR1785283,MR766959}. In the Hardy space of infinite dimensions however, which are related to Dirichlet series of square summable coefficients, it was shown by J. Ortega--Cerd\`{a} and K. Seip \cite{MR2993031} that Nehari's theorem fails. For $p=2$, Nehari's theorem holds for infinite dimensions, and thus for all finite dimensions, as was shown by H. Helson \cite{MR2263964}. For more information about the infinite dimensional Hardy space the reader can refer to \cite[Chapter 4]{MR3099268}. O. F. Brevig and K.--M. Perfekt \cite{MR3422087} showed that for the infinite dimensional Hankel operators, Nehari's theorem fails for all $p>(1-\log\pi/\log 4)^{-1}= 5.7388...$.
	\par 
	Let us define the $\omega_{\Omega}$ function of the convex set $\Omega\subset\mathbb{R}^{n}$ as the convolution function of the characteristic of $\Omega$ with itself, 
	$$\omega_{\Omega}(x)=\chi_{\Omega}\ast\chi_{\Omega}(x)=m(\Omega\cap(x-\Omega)),$$ which is finite if and only if $\Omega$ does not contain lines. This function plays an important role in the understanding of the Schatten classes of Hankel operators on Paley--Wiener spaces (see \cite{bampouras2024besovspacesschattenclass} for further information) and characterizes the Hilbert--Schmidt Hankel operators in the following sense $$\|\Ha_{\phi}\|_{S^{2}}=\|\widehat{\phi}\sqrt{\omega_{\Omega}}\|_{L^{2}}.$$
	In the case of the classical Hardy space, the role of the $\omega$ function is played by the function $\omega(j)=(j+1)\chi_{\mathbb{Z}_{+}}(j),$ $j\in\mathbb{Z}$, since $\|\Ha_{\phi}\|_{S^{2}}=\|\sqrt{j+1}\widehat{\phi}(j)\|_{\ell^{2}(\mathbb{Z}_{+})}.$ 
	\par In the case of the infinite dimensional Hardy space, the proof of Helson's inequality is based on Carleman's inequality, $\sum_{j\geq 0}\frac{|\widehat{f}(j)|^2}{j+1}\leq C\|f\|_{L^{1}}^2$, for which the best constant is $C=1$. D. Vukoti\'{c} gave an elementary proof of this inequality for $C=1$, \cite{MR1984405}, based on the Riesz factorization of $H^1(\mathbb{T})$, which is stronger than Nehari's theorem. 
	\par An alternative proof for Carleman's inequality that does not make use of Nehari's theorem (though it gives a weaker constant $C=\pi$), is by using the fact that there exists a bounded function $\psi$, such that its positive Fourier coefficients are equal to $\widehat{\psi}(j)=\frac{1}{j+1}$, $j\geq 0$, namely $\psi(t)=i(\pi-t)e^{-it},$ $0<t<2\pi$. The analogue in the case of Paley--Wiener spaces would be the following. Let $\Omega$ be a convex bounded subset of $\mathbb{R}^n$ and suppose that for an increasing sequence of open and convex sets $K_{j}$, $j\geq 0$ with $\cup_{j}K_{j}=\Omega$, we can find bounded functions $\psi_{j}\in L^{\infty}(\mathbb{R}^{n})$ such that $\widehat{\psi}_j=\dfrac{1}{\omega_{\Omega}}$ as distributions in $2K_{j}$ and $\|\psi_{j}\|_{L^{\infty}}\leq C<\infty$. Although  $\widehat{\psi}_j$  may have nonzero values outside $2K_j$, this will prove to be irrelevant in the context of Paley--Wiener spaces, as we shall see. For $f\in \PW^1(2\Omega)=\{f\in L^{1}(\mathbb{R}^{n}):\supp\widehat{f}\subset 2\Omega\}$ let us set $\widehat{f}_r(x)=\widehat{f}(\frac{x-2(1-r)z}{r})$, where $z\in \Omega$ is fixed and $r\in (0,1)$. Since for every $r\in (0,1)$ there is $j_r\in\mathbb{Z}_{+}$ such that $r\Omega+(1-r)z\subset K_{j_r}$, Fatou's lemma and Young's inequality give
	\begin{eqnarray}
		\int_{2\Omega}\dfrac{|\widehat{f}(x)|^{2}}{\omega_{\Omega}(x)}dx&\leq& \liminf_{r\to 1}\int_{2\Omega}\dfrac{|\widehat{f}_r(x)|^{2}}{\omega_{\Omega}(x)}dx=\liminf_{r\to 1}\int_{\mathbb{R}^{n}}|\widehat{f}_r(x)|^{2}\widehat{\psi}_{j_r}(x)dx \nonumber \\
		&=&\liminf_{r\to 1}\langle f_r\ast\psi_{j_r},f_r\rangle\leq C\|f_r\|_{L^{1}}^2=C\|f\|_{L^{1}}^2, \nonumber 
	\end{eqnarray}
where the last equality holds since $\|f_r\|_{L^1}=\|f\|_{L^1}$.
	Therefore a duality argument would imply Nehari's theorem for $p=2$. The above reasoning for an arbitrary $\Omega$ seems implausible. However, in the case of a polytope we are able to give a stronger result (see Section \ref{sec3}) which we will see is a consequence, though not an immediate one, of the Nehari theorem for the Hardy space $H^2(\mathbb{T})$. We recall that a polytope in $\mathbb{R}^{n}$ is defined as a bounded finite intersection of half-spaces.
	We give the following result.
	\begin{theo}\label{poly}
		Let $P$ be a polytope in $\mathbb{R}^{n}$. Then there exists a constant $C(P)<\infty$ such that 
		$$\int_{2P}\dfrac{|\widehat{f}(x)|}{\omega_{P}(x)}dx\leq C(P)\|f\|_{L^{1}},$$ for all $f\in \PW^{1}(2P)=\{f\in L^{1}(\mathbb{R}^{n}):\supp\widehat{f}\subset 2P\}$. Furthermore, Nehari's theorem holds for $p=2$ for all polytopes.
	\end{theo}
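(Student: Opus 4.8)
The plan is to reduce the polytope inequality to the one-dimensional case, where it follows from the classical Hardy inequality for $H^{1}(\mathbb{T})$ (equivalently, for $\PW^{1}$ of an interval), and then patch the local pieces together using a partition adapted to the combinatorial structure of the polytope. First I would analyze the function $\omega_{P}(x)=m(P\cap(x-P))$ near the boundary of $2P$. For a polytope, $\omega_{P}$ vanishes to first order as $x$ approaches a facet of $2P$ through the relative interior of that facet, to second order near a codimension-two face, and so on; in general, near a vertex $v$ of $2P$ that is the double of a vertex $v_{0}$ of $P$, the polytope $P\cap(x-P)$ is, for $x$ close to $2v_{0}$, a small translate/scaling of the tangent cone of $P$ at $v_{0}$, so $\omega_{P}(x)\asymp \dist(x,\partial(2P))$ transversally to each facet meeting $v$, and the product structure of the corner is what makes the integral converge. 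The key local estimate I would isolate is: for a simplicial corner (a cone generated by $n$ linearly independent vectors), after an affine change of variables the inequality becomes the tensor power of the one-dimensional Hardy inequality $\int_{0}^{2}\frac{|\widehat{f}(t)|}{t(2-t)}\,dt\lesssim \|f\|_{L^{1}(\mathbb{R})}$ for $f\in\PW^{1}(0,1)$, which is exactly Rochberg's transfer of Nehari/Hardy for $\PW(0,1)$.

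The main structural step is the passage from a general polytope to simplicial corners. Here I would use a smooth partition of unity $\{\chi_{j}\}$ on $2P$ subordinate to a cover by neighborhoods of the faces, arranged so that on the support of each $\chi_{j}$ the weight $\frac{1}{\omega_{P}}$ is comparable to a function of the form $\prod_{k}\frac{1}{\ell_{k}(x)}$ where the $\ell_{k}$ are the affine functions cutting out the facets of $2P$ through the relevant face. The delicate point is that multiplying $\widehat{f}$ by a cutoff $\chi_{j}$ does not preserve the class $\PW^{1}(2P)$ unless one is careful: $\chi_{j}$ must be chosen so that $\widehat{f}\chi_{j}$ remains (the Fourier transform of) an $L^{1}$ function with controlled norm, which one arranges by taking $\chi_{j}$ to be a trigonometric-polynomial-type multiplier, i.e. $\widehat{\chi_{j}}$ a finite sum of point masses, or more robustly by convolving $f$ with a fixed Schwartz function whose transform is $1$ on $2P$ and using the localization only at the level of the weight. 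I expect this bookkeeping — decomposing $\frac{1}{\omega_{P}}$ as a controlled sum of products of one-dimensional weights near each face, while keeping the $L^{1}$ norm of the localized pieces under control — to be the main obstacle; it is the reason the theorem is ``not an immediate'' consequence of the one-dimensional result.

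Once the inequality $\int_{2P}\frac{|\widehat{f}(x)|}{\omega_{P}(x)}\,dx\le C(P)\|f\|_{L^{1}}$ is established, I would deduce Nehari for $p=2$ by duality. Given a Hilbert--Schmidt Hankel operator $\Ha_{\phi}$, we have $\|\Ha_{\phi}\|_{S^{2}}=\|\widehat{\phi}\sqrt{\omega_{P}}\|_{L^{2}(2P)}<\infty$, so $\widehat{\phi}\in L^{2}(2P,\omega_{P}\,dx)$. The inequality says precisely that the map $f\mapsto \int_{2P}\frac{\widehat{f}(x)}{\omega_{P}(x)}\,\overline{g(x)}\,dx$ extends boundedly from $\PW^{1}(2P)$ for $g\in L^{\infty}$; equivalently, the linear functional $f\mapsto \int_{2P}\widehat{f}(x)\,\overline{\widehat{\psi}(x)}\,dx$ on $\PW^{1}(2P)$, where $\widehat{\psi}=\widehat{\phi}\,\omega_{P}/\omega_{P}=\widehat{\phi}$ paired against the Hilbert--Schmidt datum, is bounded; a Hahn--Banach extension then produces a bounded function $\psi\in L^{\infty}(\mathbb{R}^{n})$ with $\widehat{\psi}=\widehat{\phi}$ as distributions on $2P$, which (by the defining formula for $\Ha_{\phi}$, since only values of $\widehat{\phi}$ on $\Omega+\Omega=2P$ enter) generates the same Hankel operator. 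This is the same duality mechanism sketched in the introduction for the hypothetical $\psi_{j}$; the polytope case works because the required bounded symbol can be produced globally rather than only on an exhausting sequence $K_{j}$. I would present the reduction to simplices and the one-dimensional Hardy inequality as lemmas, then combine them with the partition of unity and close with the duality argument.
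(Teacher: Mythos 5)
Your overall architecture --- the one-dimensional Hardy inequality for an interval, tensorization to products, a Fourier-side partition of unity to localize, and a Cauchy--Schwarz/Hahn--Banach duality to get Nehari for $p=2$ --- coincides with the paper's (its Lemmas on the interval, on products, on decompositions, and on the equivalence with Nehari for $p=2$). The cutoff issue you flag as the main obstacle is in fact handled routinely there: one takes Schwartz functions $\phi_j$ with $\widehat{\phi}_j$ compactly supported and $\sum_j|\widehat{\phi}_j|=1$ on $2P$, so that $\widehat{f}\widehat{\phi}_j=\widehat{f\ast\phi_j}$ and $\|f\ast\phi_j\|_{L^1}\leq\|f\|_{L^1}\|\phi_j\|_{L^1}$; no trigonometric-polynomial multipliers are needed.

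The genuine gap is at non-simple vertices. Your local model assumes every tangent cone of $P$ is simplicial, i.e.\ affinely an orthant, so that $\omega_P$ near the corresponding corner of $2P$ is comparable to a product of affine functions and the inequality tensorizes. This is true exactly for \emph{simple} polytopes. At a non-simple vertex (e.g.\ the apex of a square pyramid in $\mathbb{R}^3$, where four facets meet), the tangent cone is not an affine image of $\mathbb{R}^n_+$, $\omega_P$ has no product structure there, and no parallelepiped neighborhood intersects $P$ in a parallelepiped, so the reduction to the one-dimensional inequality fails and your partition-of-unity step has no base case to appeal to. This is precisely where the paper does its real work: it proves the result for simple polytopes with a constant depending only on $n$ and the number of vertices, and then approximates a general polytope by simple ones with \emph{uniformly bounded} combinatorial complexity --- passing to the polar, perturbing each vertex into its support cone so that the perturbed points are in general position and their convex hull strictly contains the polar (a nontrivial positivity argument for a linear system), obtaining a decreasing sequence of simplicial polytopes with the same number of vertices, and dualizing back --- before concluding with a Fatou-type limiting lemma. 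Without some substitute for this approximation step (or a direct treatment of non-simplicial cones), your argument only proves the theorem for simple polytopes.
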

    In the case where $P$ has empty interior, the integral on the left hand side is zero, so the inequality holds trivially. Therefore the interesting case is when $P$ has non-empty interior. 
    \par This inequality is stronger than the corresponding Carleman's inequality for Paley--Wiener spaces since 
    $$\int_{2P}\dfrac{|\widehat{f}(x)|^{2}}{\omega_{P}(x)}dx\leq \|\widehat{f}\|_{L^{\infty}}\int_{2P}\dfrac{|\widehat{f}(x)|}{\omega_{P}(x)}dx\leq \|f\|_{L^{1}}\int_{2P}\dfrac{|\widehat{f}(x)|}{\omega_{P}(x)}dx.$$
	In the setting of Hardy spaces, this inequality is the analogue of Hardy's inequality for $H^{1}(\mathbb{T})$, 
	$$\sum_{j\geq 0}\frac{|\widehat{f}(j)|}{j+1}\leq \pi \|f\|_{H^{1}(\mathbb{T})},$$ where the constant $\pi$ is the best possible and is due to J. Schur \cite{MR1580823}. To see this relation, we can notice that $j+1=\chi_{\mathbb{Z}_{+}}\ast \chi_{\mathbb{Z}_{+}}(j)$, and $\mathbb{Z}_{+}$ is the support of the Fourier transform of functions in $H^1(\mathbb{T})$. Thus, we are going to refer to such an inequality as Hardy's inequality for the set $\Omega$.
	\par In Section \ref{sec4} we study the validity of a more general inequality, which we will call adjusted Hardy's inequality, i.e. for which convex sets $\Omega$ and $d\in\mathbb{R}$, there exists $C(\Omega,d)>0$ such that 
	$$\int_{2P}\dfrac{|\widehat{f}(x)|}{\omega^{d}_{\Omega}(x)}dx\leq C(\Omega,d)\|f\|_{L^{1}},$$ for all $f\in \PW^1(2\Omega)$? We give some positive and some negative results which can be summarized in the following theorem.
	\begin{theo}\label{theo3}
		Let $\Omega$ be a convex subset of $\mathbb{R}^{n}$. Then the following hold.
		\begin{enumerate}
			\item If $\Omega$ is a polytope, then the adjusted Hardy's inequality holds if and only if $d\leq 1$.
			\item If $\Omega$ is bounded, then the adjusted Hardy's inequality holds for $d<\frac{2}{n+1}$.
			\item If $\Omega$ is free of lines, then the adjusted Hardy's inequality fails for all $d>1$. If furthermore $\Omega$ is unbounded, then the adjusted Hardy's inequality fails for all $d\neq 1$.
		\end{enumerate}
	\end{theo}
	\begin{Organisation}
		The paper is organized in 3 further sections. Sections \ref{sec2} and \ref{sec3} are devoted to the proofs of Theorem \ref{ball} and \ref{poly} respectively. In Section \ref{sec4} we discuss and prove some results on the adjusted Hardy's inequality.
	\end{Organisation}
	\begin{notat}
		We will use the notation $f\lesssim g$ whenever there exists a positive constant $C$ (possibly depending on parameters understood from context) such that $f\leq Cg$. We will also use the notation $f\approx g$ whenever $f\lesssim g$ and $g\lesssim f$.
	\end{notat}
	\begin{ack}
		The author would like to thank his supervisor Karl--Mikael Perfekt and Adri\'an Llinares for valuable help and comments. He would also like to thank Ole Fredrik Brevig and Romanos Diogenes Malikiosis, whose comments helped in the improvement of Theorem \ref{poly}.
	\end{ack}
	\section{Definitions and Proof of Theorem \ref{ball}}\label{sec2}
	Let $\Omega$ be an open convex subset of $\mathbb{R}^{n}$ and $\widehat{\phi}$ be a distribution in $2\Omega$. The Hankel operator $\Ha_{\phi}$ is densely defined by the formula 
	$$\widehat{\Ha_{\phi}f}(x)=\int_{\mathbb{R}^n}\widehat{\phi}(x+y)\widehat{f}(y)\chi_{\Omega}(x)\chi_{\Omega}(y)dy,$$ which can be considered as an operator on $\PW(\Omega)$ or $L^2(\mathbb{R}^n)$. Even though the distribution $\widehat{\phi}$ may not arise from a tempered distribution, we allow ourselves to refer to $\phi$. Proposition 5.1 of \cite{MR4227573} states that the space of bounded Hankel operators is a closed subspace of the bounded operators in $\PW(\Omega)$. Thus, Schatten class Hankel operators are closed subspaces of the Schatten class operators $S^{p}$, hence we have the following lemma. 
	\begin{lm}\label{implNehari}
		Suppose that for a convex set $\Omega\subset\mathbb{R}^{n}$, Nehari's theorem holds for some $p\geq 1$. Then there exists $C=C(\Omega,p)>0$ such that 
		$$\inf\{\|\psi\|_{L^{\infty}}:\widehat{\psi}|_{2\Omega}=\widehat{\phi}|_{2\Omega}\}\leq C\|\Ha_{\phi}\|_{S^{p}}.$$
	\end{lm}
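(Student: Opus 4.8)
The plan is to realize the asserted estimate as the boundedness of a linear map between two Banach spaces and then invoke the closed graph theorem. Let $N=\{\psi\in L^{\infty}(\mathbb{R}^{n}):\widehat{\psi}|_{2\Omega}=0\}$ and let $Z=L^{\infty}(\mathbb{R}^{n})/N$ carry the quotient norm, so that $\|[\psi]\|_{Z}=\inf\{\|\eta\|_{L^{\infty}}:\widehat{\eta}|_{2\Omega}=\widehat{\psi}|_{2\Omega}\}$; since for $g\in C_{c}^{\infty}(2\Omega)$ the pairing $\psi\mapsto\langle\widehat{\psi},g\rangle=\int\psi\,\widehat{g}$ is continuous on $L^{\infty}$ (as $\widehat{g}\in L^{1}$), the subspace $N$ is closed and $Z$ is a Banach space. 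Let $Y$ denote the space of Hankel operators on $\PW(\Omega)$ that lie in $S^{p}(\PW(\Omega))$, equipped with the $S^{p}$ norm. By Proposition 5.1 of \cite{MR4227573} the bounded Hankel operators form a closed subspace of the bounded operators on $\PW(\Omega)$, and $S^{p}$ embeds continuously into the bounded operators; hence an $S^{p}$-limit of Hankel operators is a bounded Hankel operator lying in $S^{p}$, so $Y$ is closed in $S^{p}$ and is a Banach space.

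Next I would define the map. A bounded Hankel operator determines the restriction of its symbol to $2\Omega$: if $\Ha_{\phi}=\Ha_{\phi'}$ then $\int_{\Omega}\widehat{(\phi-\phi')}(x+y)\widehat{f}(y)\,dy$ vanishes for a.e.\ $x\in\Omega$ and every $f\in\PW(\Omega)$, and a standard density argument then forces $\widehat{(\phi-\phi')}=0$ as a distribution on $\Omega+\Omega=2\Omega$. Combining this with the hypothesis that Nehari's theorem holds for $\Omega$ and $p$ — which supplies, for each $\Ha_{\phi}\in Y$, a function $\psi\in L^{\infty}(\mathbb{R}^{n})$ with $\widehat{\psi}|_{2\Omega}=\widehat{\phi}|_{2\Omega}$ — one obtains a well-defined linear map $\Lambda\colon Y\to Z$, $\Ha_{\phi}\mapsto[\psi]$. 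By construction $\|\Lambda(\Ha_{\phi})\|_{Z}=\inf\{\|\eta\|_{L^{\infty}}:\widehat{\eta}|_{2\Omega}=\widehat{\phi}|_{2\Omega}\}$, so the lemma is exactly the assertion that $\Lambda$ is bounded.

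Finally I would check that $\Lambda$ has closed graph. Suppose $\Ha_{\phi_{k}}\to\Ha_{\phi}$ in $S^{p}$ and $\Lambda(\Ha_{\phi_{k}})\to[\psi]$ in $Z$. Choosing representatives $\psi_{k}\in L^{\infty}(\mathbb{R}^{n})$ with $\widehat{\psi_{k}}|_{2\Omega}=\widehat{\phi_{k}}|_{2\Omega}$ and $\|\psi_{k}-\psi\|_{L^{\infty}}\to 0$, the elementary bound $\|\Ha_{\eta}\|\leq\|\eta\|_{L^{\infty}}$ gives $\Ha_{\psi_{k}}\to\Ha_{\psi}$ in operator norm, while $\Ha_{\psi_{k}}=\Ha_{\phi_{k}}\to\Ha_{\phi}$ in $S^{p}$ and a fortiori in operator norm; hence $\Ha_{\psi}=\Ha_{\phi}$, so $\widehat{\psi}|_{2\Omega}=\widehat{\phi}|_{2\Omega}$ and $[\psi]=\Lambda(\Ha_{\phi})$. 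The closed graph theorem now yields a constant $C=C(\Omega,p)$ with $\|\Lambda y\|_{Z}\leq C\|y\|_{S^{p}}$ for all $y\in Y$, which is the claimed inequality. The only points requiring real care are the two completeness verifications — above all that $Y$ is complete, where Proposition 5.1 of \cite{MR4227573} is the crucial input — and the observation that the hypothesis "Nehari's theorem holds for $p$" is precisely what guarantees that $\Lambda$ takes values in cosets of genuine $L^{\infty}$ functions, so that the closed-graph argument can be carried out with $L^{\infty}$ representatives rather than mere distributions.
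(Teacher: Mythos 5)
Your proof is correct and follows essentially the same route as the paper: both realize the inequality as automatic continuity of a map between the quotient Banach space $L^{\infty}(\mathbb{R}^{n})/N$ and the $S^{p}$-closed space of Schatten-class Hankel operators, with Proposition 5.1 of \cite{MR4227573} supplying the completeness of the latter. The only differences are cosmetic: you apply the closed graph theorem directly to the operator-to-symbol map $\Lambda=T^{-1}$, whereas the paper applies it to the symbol-to-operator map $T$ and then invokes the open mapping theorem, and you additionally spell out the injectivity of the symbol-to-operator correspondence (well-definedness of $\Lambda$), which the paper leaves implicit in calling $T$ a bijection.
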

	\begin{proof}
		Let $\mathcal{X}$ be the quotient Banach space $L^{\infty}(\mathbb{R}^{n})/\sim$, where $\phi\sim\psi$ whenever $\widehat{\phi}|_{2\Omega}=\widehat{\psi}|_{2\Omega}$, and let $\mathcal{X}_{p}=\{\phi\in \mathcal{X}:\Ha_{\phi}\in S^{p}(\PW(\Omega))\}$. Let also $\mathcal{H}$ be the bounded Hankel operators on $\PW(\Omega)$ and $\mathcal{H}_{p}=\mathcal{H}\cap S^{p}(\PW(\Omega))$. Then the operator $T:\mathcal{X}_{p}\to\mathcal{H}_{p}$, $T\phi=\Ha_{\phi}$ is a bijection by our assumption. Let now $\phi_{j}\in \mathcal{X}_{p}$, such that $\phi_{j}\to 0$ in $\mathcal{X}_{p}$ and $\Ha_{\phi_{j}}\to S$ in $S^{p}(\PW(\Omega))$, where $S$ is a Schatten class operator, $S\in S^{p}(\PW(\Omega))$. Then there exist $\psi_{j}\in L^{\infty}(\mathbb{R}^{n})$, $\phi_{j}\sim\psi_{j}$ such that $\|\psi_{j}\|_{L^{\infty}}\to 0$. This implies that
		$$\|S\|\leq \|\Ha_{\psi_{j}}-S\|+\|\Ha_{\psi_{j}}\|\leq \|\Ha_{\phi_{j}}-S\|_{S^{p}}+\|\psi_{j}\|_{L^{\infty}}\to 0.$$
		Therefore $S=0$ and by the closed graph theorem $T$ is bounded. Finally, by the open mapping theorem $T^{-1}$ is bounded as desired.
	\end{proof}
	Using H\"{o}lder's inequality we can see that for every $f$ Schwarz function with $\supp\widehat{f}\subset 2\Omega$, 
	\begin{eqnarray}\label{1}
		\dfrac{|\langle f,\phi\rangle|}{\|f\|_{L^{1}}}\leq \inf\{\|\psi\|_{L^{\infty}}:\widehat{\psi}|_{2\Omega}=\widehat{\phi}|_{2\Omega}\}.
	\end{eqnarray}
	Combining inequality \eqref{1} with Lemma \ref{implNehari}, to disprove Nehari's theorem for some $p\geq 1$ it suffices to find a sequence of Schwarz functions $f_{N}$, $\supp\widehat{f}_{N}\subset 2\Omega$, and a sequence of Schwarz functions $\phi_{N}$ such that 
	\begin{eqnarray}\label{2}
		\dfrac{|\langle f_{N},\phi_{N}\rangle|}{\|f_{N}\|_{L^{1}}\|\Ha_{\phi_{N}}\|_{S^{p}}}\to \infty.
	\end{eqnarray}
	The construction of these sequences are inspired by \cite{MR4502777}. Let $N$ be a fixed integer, and suppose that we can find Schwarz functions $\phi_{i}$, $i=1,...,N$ with Fourier transforms supported in $2\Omega$ such that the sets $D_{\phi_{i}}$ are disjoint, where 
	$$D_{\phi_{i}}=\Omega\cap(\supp\widehat{\phi_{i}}-\Omega).$$ 
	Let us set $\psi_{N}=\sum_{i=1}^{N}\phi_{i}$. The Hankel operators generated by these $\phi_{i}$ are orthogonal (see the proof of Lemma 2 in \cite{MR4700194}), hence \begin{eqnarray}\label{3}
		\|\Ha_{\psi_{N}}\|^{p}_{S^{p}}=\sum_{i=1}^{N}\|\Ha_{\phi_{i}}\|^{p}_{S^{p}}.
	\end{eqnarray}
	Since $\Ha_{\phi}^{\ast}=\Ha_{\psi}$ where $\widehat{\psi}=\overline{\widehat{\phi}}$, by \cite[Theorem 1]{MR500308}, we have that for $p>2$ the Schatten norm satisfies the bound $$\|\Ha_{\phi_{i}}\|_{S^{p}}\leq (\|\widehat{\phi}_{i}(x+y)\chi_{\Omega}(x)\chi_{\Omega}(y)\|_{p',p}\|\overline{\widehat{\phi}}_{i}(x+y)\chi_{\Omega}(x)\chi_{\Omega}(y)\|_{p',p})^{\frac{1}{2}},$$ 
	where for a function $g$ we use the notation
	$$\|g\|_{p',p}=\left(\int\left(\int |g(x,y)|^{p'}dx\right)^{\frac{p}{p'}}dy\right)^{\frac{1}{p}},$$ and $p'$ is the conjugate exponent of $p$, $\frac{1}{p}+\frac{1}{p'}=1$.
	Therefore using Minkowski's inequality we get that 
	\begin{eqnarray}\label{4}
		\|\Ha_{\phi_{i}}\|_{S^{p}}\leq \|\widehat{\phi_{i}}\omega_{\Omega}^{\frac{1}{p}}\|_{L^{p'}}.
	\end{eqnarray}
	Thus by \eqref {2}, \eqref{3} and \eqref{4}, to contradict Nehari's theorem for some $p\geq 1$ it suffices for every integer $N$ to find a sequence of Schwarz functions $\phi_{i}$ with disjoint $D_{\phi_{i}}$ and $\supp\widehat{\phi}_{i}\subset 2\Omega$, $i=1,...,N$, such that for $\psi_{N}=\sum_{i=1}^{N}\phi_{i}$,
	\begin{eqnarray}\label{5}
		\dfrac{\sum_{i=1}^{N}\|\phi_{i}\|^{2}_{L^{2}}}{\|\psi_{N}\|_{L^{1}}\sqrt[p]{\sum_{i=1}^{N}\|\widehat{\phi_{i}}\omega_{\Omega}^{\frac{1}{p}}\|^{p}_{L^{p'}}}}\to \infty. 
	\end{eqnarray}
	The following lemma makes use of the left hand side term in \eqref{5}, combined with some geometrical assumptions, to derive the existence of a uniform upper bound, assuming the Nehari theorem for some $p$.
	\begin{lm}\label{generalnegation}
		Let $\Omega$ be a convex subset of $\mathbb{R}^{n}$. Suppose that for $\epsilon>0$ we can find $N$ vectors $y_{1},...,y_{N}\in \partial\Omega$, $N$ vectors $x_{1},...,x_{N}\in\Omega$ and $r>0$ such that the following hold:
		\begin{enumerate}
			\item \label{prop1} $B(y_{i},\epsilon)\cap B(y_{j},\epsilon)=\emptyset$ for $i\neq j$.
			\item \label{prop2} $\overline{B}(x_{i},r)\subset\Omega$.
			\item \label{prop3} $(2\overline{B}(x_{i},r)-\Omega)\cap\Omega\subset B(y_{i},\epsilon).$
		\end{enumerate}
		Let us set $a=\max_{i=1,...,N}\sup_{x\in 2B(x_{i},r)}\omega_{\Omega}(x)$. If Nehari's theorem holds for some $p\geq 1$, then for every $d>0$ there exists $C(\Omega,p,d)>0$ such that, $$N^{\frac{d}{n+2d}-\frac{1}{p}}\left(\frac{r^{n}}{a}\right)^{\frac{1}{p}}\leq C(\Omega,p,d).$$
	\end{lm}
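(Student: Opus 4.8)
The plan is to apply the reduction encapsulated in inequality \eqref{5}: assuming Nehari's theorem holds for some $p$, we must exhibit, for each $N$, a choice of Schwarz functions $\phi_i$ with disjoint $D_{\phi_i}$ and $\supp\widehat\phi_i\subset 2\Omega$ so that the left-hand side of \eqref{5} stays bounded, and read off the claimed inequality from that uniform bound. First I would choose the $\phi_i$ to be (smoothed) bump functions whose Fourier transforms are supported in $2\overline B(x_i,r)\subset 2\Omega$; concretely, fix a fixed Schwarz profile $g$ with $\widehat g$ supported in the unit ball, rescale it to $B(0,r)$ and translate so that $\supp\widehat\phi_i\subset 2\overline B(x_i,r)$. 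With this choice $D_{\phi_i}=\Omega\cap(\supp\widehat\phi_i-\Omega)\subset(2\overline B(x_i,r)-\Omega)\cap\Omega\subset B(y_i,\epsilon)$ by hypothesis \eqref{prop3}, and since the balls $B(y_i,\epsilon)$ are pairwise disjoint by \eqref{prop1}, the sets $D_{\phi_i}$ are pairwise disjoint, so the orthogonality \eqref{3} and hence the reduction \eqref{5} applies.

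Next I would estimate each of the three quantities in \eqref{5} in terms of $r$, $a$ and $N$. By the rescaling, $\|\phi_i\|_{L^2}^2\approx r^{?}$ — more precisely, writing $\phi_i(\xi)=r^n e^{2\pi i\langle c_i,\xi\rangle}g(r\xi)$ for a suitable shift so that $\widehat\phi_i$ has the right support, we get $\|\phi_i\|_{L^2}^2\approx r^{2n}\cdot r^{-n}=r^{n}$ and $\|\phi_i\|_{L^1}\approx r^{n}\cdot r^{-n}=1$, up to constants depending only on $g$ and $n$; actually one must be a little careful, and it is cleaner to normalize so that the constants are absorbed, but the scaling exponents are what matter. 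For the denominator's second factor, since $\widehat\phi_i$ is supported in $2B(x_i,r)$ where $\omega_\Omega\le a$, we have $\|\widehat\phi_i\,\omega_\Omega^{1/p}\|_{L^{p'}}\le a^{1/p}\|\widehat\phi_i\|_{L^{p'}}\approx a^{1/p} r^{n/p}$ (again from the rescaling, $\|\widehat\phi_i\|_{L^{p'}}\approx r^{n-n/p'}=r^{n/p}$). Summing over $i$: the numerator is $\approx N r^n$; the $L^1$ norm $\|\psi_N\|_{L^1}=\|\sum\phi_i\|_{L^1}\le\sum\|\phi_i\|_{L^1}\approx N$ (here the triangle inequality suffices — one does not need the $D_{\phi_i}$ disjoint for this, only the support disjointness which we have); and the last factor is $\approx (N a r^{n})^{1/p}$. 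Hence \eqref{5} forces
$$
\frac{N r^n}{N\cdot (N a r^n)^{1/p}}=N^{1-1/p-1}\, r^{n(1-1/p)}\,a^{-1/p}\lesssim 1,
$$
i.e. $N^{-1/p}(r^n/a)^{1/p}\cdot r^{?}\lesssim 1$; but wait — this does not yet contain the free parameter $d$, which signals that the $\phi_i$ must be chosen more cleverly, concentrating at different scales, which is where the parameter $d$ enters.

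The role of $d$ is that instead of a single scale $r$ one should build $\phi_i$ out of a superposition, or rather choose the $L^1$-normalization against the $L^2$-mass trade-off optimally; more precisely one replaces the single bump by $\phi_i$ whose Fourier transform, still supported in $2B(x_i,r)$, is a sum of $M$ translated mini-bumps of scale $r/M^{1/n}$ arranged inside the ball, or equivalently one dilates the profile $g$ by an extra factor. Tracking one free dilation parameter $t\in(0,1]$ (so $\widehat\phi_i$ is supported in $2B(x_i,tr)$, or one uses frequency $t$-many copies) produces a family for which the quotient in \eqref{5} is $\gtrsim N^{?}(r^n/a)^{1/p} t^{\gamma}$ for some exponent $\gamma$ depending on $p$ and $n$; optimizing $t\in(0,1]$ (taking $t\to0$ or $t=1$ according to the sign of $\gamma$) and demanding the resulting bound be finite yields exactly $N^{\frac{d}{n+2d}-\frac1p}(r^n/a)^{1/p}\lesssim C(\Omega,p,d)$, where $d$ is the parameter governing how the extra scale is coupled to $N$ (for instance choosing the number of sub-bumps $\approx N^{d/\text{(something)}}$). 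The main obstacle, and the step requiring genuine care, is this optimization: one must identify the correct family of test functions depending on $d$ — most naturally $\phi_i$ with $\widehat\phi_i$ a rescaled Fej\'er-type kernel so that $\|\phi_i\|_{L^2}^2$, $\|\phi_i\|_{L^1}$ and $\|\widehat\phi_i\|_{L^{p'}}$ scale with independent exponents — and verify that the exponent bookkeeping produces precisely $\frac{d}{n+2d}$, rather than some other function of $d$. I would organize it by: (i) fixing the profile and stating its three norm scalings as a sublemma; (ii) plugging into \eqref{5} with an undetermined scale; (iii) choosing that scale as a power of $N$ dictated by $d$; (iv) simplifying the exponents to reach the displayed inequality.
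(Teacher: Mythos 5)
Your setup coincides with the paper's: the same rescaled bump functions with $\supp\widehat{\phi}_i\subset 2\overline{B}(x_i,r)$, the same verification that hypotheses \eqref{prop1} and \eqref{prop3} force the $D_{\phi_i}$ to be disjoint so that \eqref{3}--\eqref{5} apply, and essentially the same estimates for $\sum_i\|\phi_i\|_{L^2}^2\approx Nr^n$ and for $\|\widehat{\phi}_i\omega_\Omega^{1/p}\|_{L^{p'}}\lesssim a^{1/p}r^{n/p'}$ (note two arithmetic slips: the correct exponent is $r^{n/p'}$, not $r^{n/p}$, and the summed factor is $(Nar^{n(p-1)})^{1/p}$, not $(Nar^n)^{1/p}$; with these corrected, your triangle-inequality computation yields exactly $N^{-1/p}(r^n/a)^{1/p}\lesssim 1$, i.e.\ the statement with the $N^{d/(n+2d)}$ gain missing). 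The genuine gap is your explanation of where $d$ enters. It does not come from a multi-scale or Fej\'er-type modification of the $\phi_i$ — the paper keeps the single-scale bumps throughout. It comes from improving the bound $\|\psi_N\|_{L^1}\le N$ to $\|\psi_N\|_{L^1}\lesssim_d N^{\frac{n+d}{n+2d}}$, which is the only place the free parameter $d$ appears.

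Concretely, one splits $\|\psi_N\|_{L^1}=I_1+I_2$ over $\{|x|\le R/(2r)\}$ and its complement. On the near region, Cauchy--Schwarz gives $I_1\le\|\psi_N\|_{L^2}\,m(B(0,R/(2r)))^{1/2}$, and since the $\widehat{\phi}_i$ have pairwise disjoint supports (a consequence of the disjointness of the $D_{\phi_i}$), $\|\psi_N\|_{L^2}^2=\sum_i\|\phi_i\|_{L^2}^2\approx Nr^n$, so $I_1\lesssim R^{n/2}\sqrt{N}$. On the far region, each $|\phi_i|$ is a modulated dilate of the fixed Schwartz profile, so $\int_{|x|>R/(2r)}|\phi_i|=\int_{|x|>R}|\phi|\lesssim_d R^{-d}$ by rapid decay, whence $I_2\lesssim_d NR^{-d}$. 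Choosing $R=N^{1/(n+2d)}$ balances the two terms and gives $\|\psi_N\|_{L^1}\lesssim N^{(n+d)/(n+2d)}$; substituting this into the (corrected) quotient produces precisely $N^{\frac{d}{n+2d}-\frac1p}(r^n/a)^{1/p}\le C(\Omega,p,d)$. Without this step your argument only proves the lemma with exponent $-\frac1p$ in place of $\frac{d}{n+2d}-\frac1p$, which is too weak to negate Nehari's theorem in Corollaries \ref{cor 1} and \ref{cor 2}, and the alternative mechanism you sketch (sub-bumps at scale $r/M^{1/n}$, an extra dilation parameter $t$) is left entirely undeveloped and is not needed.
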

	\begin{proof}
		Let $\phi$ be a smooth function in $\mathbb{R}^{n}$ such that $0\leq \widehat{\phi}\leq 1$, $\widehat{\phi}=1$ on $\frac{1}{2}B(0,1)$ and zero outside $B(0,1)$. We define $\widehat{\phi}_{i}(x)=\widehat{\phi}(\frac{x-2x_{i}}{2r})$ and $\psi=\sum_{i=1}^{N}\phi_{i}$. Then, if Nehari's theorem holds for certain $p\geq 1$, by \eqref{1}, \eqref{3}, \eqref{4} and Lemma \ref{implNehari} we have that there exists a constant $C(\Omega,p)>0$ such that
		$$ \dfrac{\sum_{i=1}^{N}\|\widehat{\phi}_{i}\|_{L^{2}}^{2}}{\|\psi\|_{L^{1}}\sqrt[p]{\sum_{i=1}^{N}\|\widehat{\phi}_{i}\omega_{\Omega}^{\frac{1}{p}}\|_{L^{p'}}^{p}}}\leq \dfrac{\|\widehat{\psi}\|^{2}_{L^{2}}}{\|\psi\|_{L^{1}}\|\Ha_{\psi}\|_{S^{p}}}\leq C(\Omega,p).$$
		Since by definition $\supp\widehat{\phi}_{i}=2B(x_{i},r)$, we get that $\|\widehat{\phi}_{i}\|_{L^{2}}\approx r^{\frac{n}{2}}$ and $\|\widehat{\phi}_{i}\omega_{\Omega}^{\frac{1}{p}}\|_{L^{p'}}\lesssim r^{\frac{n}{p'}}a^{\frac{1}{p}}$. Regarding the norm $\|\psi\|_{L^{1}}$ we proceed as follows. Let $R>0$ and set $$\|\psi\|_{L^{1}}=\int_{|x|\leq\frac{R}{2r}}|\psi|+\int_{|x|>\frac{R}{2r}}|\psi|:=I_{1}+I_{2}.$$
		We first bound $I_{1}$ using the Cauchy--Schwarz inequality,
		$$I_{1}\leq \|\psi\|_{L^{2}}\sqrt{m(B(0,R/2r))}\approx \left(\sum_{i=1}^{N}\|\phi_{i}\|_{L^{2}}^{2} \right)^{\frac{1}{2}} (R/r)^{\frac{n}{2}}\approx(R/r)^{\frac{n}{2}}\sqrt{Nr^{n}}=R^{\frac{n}{2}}\sqrt{N}.$$
		For $I_{2}$ we compute 
		$$I_{2}\leq \sum_{i=1}^{N}\int_{|x|>\frac{R}{2r}}|\phi_{i}|=\sum_{i=1}^{N}\int_{|x|>R}|\phi|=N\int_{|x|>R}\frac{||x|^{d}\phi|}{|x|^{d}}\lesssim \frac{N}{R^{d}},$$ where the last estimate holds with a constant depending on $d$. Combining these two inequalities we get that
		$$\|\psi\|_{L^{1}}\lesssim R^{\frac{n}{2}}\sqrt{N}+R^{-d}N.$$ Since we want to minimize this upper bound, we set $R=N^{\frac{1}{n+2d}}$ and get the bound $\|\psi\|_{L^{1}}\lesssim N^{\frac{n+d}{n+2d}}.$ Therefore we get that there exists $C(\Omega,p,d)>0$ such that 
		$$\dfrac{Nr^{n}}{N^{\frac{n+d}{n+2d}}\sqrt[p]{\sum_{i=1}^{N}ar^{n(p-1)}}}\leq C(\Omega,p,d),$$ which simplifies to $$N^{\frac{d}{n+2d}-\frac{1}{p}}\left(\frac{r^{n}}{a}\right)^{\frac{1}{p}}\leq C(\Omega,p,d),$$ as desired.
	\end{proof}
	In order to control the number $a$ of Lemma \ref{generalnegation}, we need to be able to compute the $\omega$ function of the $n$--dimensional ball, thus we need the following lemma.
	\begin{lm}\label{computew}
		It is true that 
		$$\omega_{B(0,1)}(x)\approx (2-|x|)^{\frac{n+1}{2}}\chi_{2B(0,1)}(x).$$ 
	\end{lm}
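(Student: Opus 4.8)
The plan is to compute $\omega_{B(0,1)}$ directly from its definition as a volume of intersection. Since $B(0,1)$ is symmetric we have $\omega_{B(0,1)}(x)=m\big(B(0,1)\cap(x-B(0,1))\big)=m\big(B(0,1)\cap B(x,1)\big)$, and this quantity is radial and supported on $2B(0,1)$. So it suffices to estimate $g(t):=m\big(B(0,1)\cap B(te_1,1)\big)$ for $t\in[0,2]$ and to show $g(t)\approx(2-t)^{(n+1)/2}$ with constants depending only on $n$.

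The first step is to reduce the lens to a spherical cap. Under the reflection $y_1\mapsto t-y_1$ the set $B(0,1)\cap B(te_1,1)$ is symmetric about the hyperplane $\{y_1=t/2\}$, hence $g(t)=2\,m\big(\{y:|y|\le 1,\ |y-te_1|\le 1,\ y_1\ge t/2\}\big)$. For $y_1\ge t/2$ one checks $(y_1-t)^2\le y_1^2$, so the constraint $|y-te_1|\le 1$ is automatically implied by $|y|\le 1$; therefore $g(t)=2\,m\big(\{y:|y|\le 1,\ y_1\ge t/2\}\big)$, which is twice the volume of the cap of the unit ball of height $h:=1-t/2=(2-t)/2\in[0,1]$.

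The second step is to estimate this cap volume. Slicing by $y_1=s\in[1-h,1]$ gives
$$m(\mathrm{cap})=\omega_{n-1}\int_{1-h}^{1}(1-s^2)^{(n-1)/2}\,ds,$$
where $\omega_{n-1}$ denotes the volume of the unit ball of $\mathbb{R}^{n-1}$. On this range $1+s\in[2-h,2]\subset[1,2]$, so $1-s\le 1-s^2\le 2(1-s)$; substituting $u=1-s$ yields
$$\omega_{n-1}\int_0^h u^{(n-1)/2}\,du\ \le\ m(\mathrm{cap})\ \le\ 2^{(n-1)/2}\omega_{n-1}\int_0^h u^{(n-1)/2}\,du,$$
and $\int_0^h u^{(n-1)/2}\,du=\tfrac{2}{n+1}h^{(n+1)/2}$. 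Hence $g(t)=2\,m(\mathrm{cap})\approx h^{(n+1)/2}\approx(2-t)^{(n+1)/2}$ uniformly in $t\in[0,2]$, which is the claim.

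I do not expect a real obstacle here; the only point that deserves care is the uniformity of the comparison $1-s^2\approx 1-s$ over the \emph{entire} slicing range rather than merely for $s$ near $1$. This holds because $1+s$ stays between the absolute constants $1$ and $2$ on $[2-h,2]$ for every $h\le 1$, which is precisely why the estimate needs no separate argument near the boundary $|x|=2$ as opposed to the interior.
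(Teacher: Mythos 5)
Your proof is correct and follows essentially the same route as the paper: reduce by symmetry to twice the volume of the spherical cap $\{y\in B(0,1): y_1\ge |x|/2\}$, slice, and compare $(1-s^2)^{(n-1)/2}$ with $(1-s)^{(n-1)/2}$ uniformly since $1+s\in[1,2]$ on the relevant range. The only difference is that you spell out the justification that the lens equals twice the cap and the uniformity of the comparison, which the paper leaves implicit.
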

	\begin{proof}
		Since both quantities are $0$ outside $2B(0,1)$, we only need to treat the case $x\in 2B(0,1)$. Also, by rotational symmetry it suffices to prove it for $x\in(0,2)\times\{0\}^{n-1}$. Furthermore, by the symmetry of the ball, we can see that the $\omega_{B(0,1)}(x)$ equals twice the measure of $B(0,1)\cap \{(t_1,...,t_n)\in\mathbb{R}^{n}:t_{1}>\frac{|x|}{2}\}$. Now by Cavalieri's principle we can compute 
		$$\omega_{B(0,1)}(x)=2\int_{|x|/2}^{1}m_{n-1}(B(0,1)\cap \{t_{1}=t\})dt,$$ where $m_{n-1}$ is the $n-1$ dimensional Lebesgue measure. We can see that the sets $B(0,1)\cap \{t_{1}=t\}$ are $n-1$ dimensional balls of radii $\sqrt{1-t^{2}}$, therefore we get that 
		$$\omega_{B(0,1)}(x)\approx \int_{|x|/2}^{1}(1-t^{2})^{\frac{n-1}{2}}dt\approx \int_{|x|/2}^{1}(1-t)^{\frac{n-1}{2}}dt\approx (2-|x|)^{\frac{n+1}{2}},$$ as desired.
	\end{proof}
	The following lemma will help our understanding on the relation between the $x_{i}$'s and $r$ in Lemma \ref{generalnegation} for the case of the ball, which will later help us approach the more general case. 
	\begin{lm}\label{disc}
		There exist $C,\epsilon_{0}>0$ such that for every $\epsilon\in(0,\epsilon_{0})$ it is true that 
		$$(2\overline{B}((1-C\epsilon^{2})x,C\epsilon^{2})-B(0,1))\cap B(0,1)\subset B(x,\epsilon),$$ for all $x\in \partial B(0,1).$
	\end{lm}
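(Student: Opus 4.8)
The plan is to reduce to a single model configuration by rotational symmetry and then extract the inclusion from two one–sided coordinate estimates. Since rotations of $\mathbb{R}^{n}$ fixing the origin preserve all the sets involved, I would assume $x=e_{1}=(1,0,\dots,0)$. A generic element of the left–hand side can then be written as
\[ z=2(1-C\epsilon^{2})e_{1}+u-v,\qquad |u|\le 2C\epsilon^{2},\quad v\in B(0,1),\quad z\in B(0,1), \]
where the bound on $u$ encodes membership of $2(1-C\epsilon^{2})e_{1}+u$ in $2\overline{B}((1-C\epsilon^{2})e_{1},C\epsilon^{2})$. I write $a=2(1-C\epsilon^{2})$ and decompose vectors as $z=(z_{1},z')$ with $z'\in\mathbb{R}^{n-1}$.

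The first step is a lower bound for $z_{1}$. From $z_{1}=a+u_{1}-v_{1}$ together with $v_{1}\le|v|<1$ and $|u_{1}|\le|u|\le 2C\epsilon^{2}$ one gets
\[ z_{1}=a+u_{1}-v_{1}>(2-2C\epsilon^{2})-2C\epsilon^{2}-1=1-4C\epsilon^{2}. \]
The point I expect to matter here (and the only place where a naive argument breaks) is that one must use the \emph{linear} consequence $v_{1}<1$ of $v\in B(0,1)$: keeping only the radial information $|v|^{2}<1$ and discarding the transverse coordinates of $v$ would give merely $z_{1}\gtrsim 3/4$, which is useless. Since $z\in B(0,1)$ also forces $z_{1}<1$, this yields $0<1-z_{1}<4C\epsilon^{2}$.

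The second step controls the transverse part: from $z\in B(0,1)$,
\[ |z'|^{2}\le 1-z_{1}^{2}=(1-z_{1})(1+z_{1})<2(1-z_{1})<8C\epsilon^{2}, \]
and combining with the first step,
\[ |z-e_{1}|^{2}=(1-z_{1})^{2}+|z'|^{2}<16C^{2}\epsilon^{4}+8C\epsilon^{2}. \]
It then remains only to fix constants: taking, say, $C=\tfrac1{16}$ and $\epsilon_{0}=1$ makes the right–hand side at most $\tfrac{\epsilon^{4}}{16}+\tfrac{\epsilon^{2}}{2}<\epsilon^{2}$ for all $\epsilon\in(0,\epsilon_{0})$ (more generally any $C<\tfrac18$ works once $\epsilon_{0}$ is chosen with $16C^{2}\epsilon_{0}^{2}+8C\le 1$), so $z\in B(e_{1},\epsilon)$ and the inclusion follows. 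I do not expect a genuine obstacle beyond simultaneously exploiting the two membership relations $v\in B(0,1)$ and $z\in B(0,1)$ through the first coordinate; everything else is elementary. This is precisely the configuration needed to verify hypothesis \eqref{prop3} of Lemma \ref{generalnegation} for the ball, with $y_{i}=x$, centre $x_{i}=(1-C\epsilon^{2})x$ and radius $r=C\epsilon^{2}$, which is how the lemma will be used.
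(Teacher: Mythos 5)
Your proof is correct. You and the paper both reduce to $x=e_{1}$ by rotational invariance, but the executions differ: the paper first establishes the single-center inclusion $(se_{1}-B(0,1))\cap B(0,1)\subset B(e_{1},\sqrt{2-s})$ by the two-circle computation of Figure \ref{fig:1a}, then handles the full set $2\overline{B}((1-C\epsilon^{2})e_{1},C\epsilon^{2})=\overline{B}(se_{1},2-s)$ by writing it as a union over points $ru$, applying the single-center inclusion at each $u\in\partial B(0,1)$, and absorbing the displacement $|u-e_{1}|\lesssim 2-s$ via the triangle inequality. You instead absorb the perturbation $u$ with $|u|\le 2C\epsilon^{2}$ directly into one coordinate computation: the linear bound $v_{1}<1$ forces $1-z_{1}<4C\epsilon^{2}$, and then $|z'|^{2}<2(1-z_{1})$ finishes it. Your route is more self-contained (the paper's base case is delegated to a figure, whereas you carry out the algebra), it avoids the union-and-triangle-inequality step, and it produces explicit admissible constants ($C=1/16$, $\epsilon_{0}=1$); the paper's decomposition, on the other hand, isolates the reusable geometric fact that near-antipodal lens intersections of unit balls are small, which is the picture it wants the reader to retain for Lemma \ref{indiv}. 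Both arguments yield the uniform constants the lemma requires, and your closing remark correctly identifies how the inclusion feeds into hypothesis \eqref{prop3} of Lemma \ref{generalnegation}.
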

	\begin{proof}
		By rotational symmetry, it suffices to prove it for $x=(1,0,...,0)$. By straightforward computation (see Figure \ref{fig:1a}) we notice that for big enough $s\in (0,2)$, say $s\in (s_{0},2)$,
		$$\big(sx-B(0,1)\big)\cap B(0,1)\subset B(x,\sqrt{2-s}).$$ 
		Therefore we get that for every $u\in\partial B(0,1)$,
		$$\big(su-B(0,1)\big)\cap B(0,1)\subset B(u,\sqrt{2-s}).$$
		Thus for $s\in (\frac{s_{0}+2}{2},2)$ we get that
		\begin{multline*}
			\big(\overline{B}(sx,2-s)-B(0,1))\cap B(0,1)\subset \bigcup_{ru\in \overline{B}(sx,2-s)} B(u,\sqrt{2-r}) \\
			\subset \bigcup_{ru\in \overline{B}(sx,2-s)} B(x,\sqrt{2-r}+|u-x|),
		\end{multline*}
		where, in the product $ru$, $r$ is positive and $u$ belongs to the unit circle $\partial B(0,1).$ 
		Since $ru\in \overline{B}(sx,2-s)$, we have that $r=|sx+(2-s)v|$, $v\in \overline{B}(0,1)$, thus
		$$2-r=2-|sx+(2-s)v|\leq 2-\big|s-|2-s||u|\big|\leq 4-2s. $$
		and as can be seen by Figure \ref{fig:1b}, $|x-u|\lesssim 2-s$. Therefore $\sqrt{2-r}+|u-x|\lesssim\sqrt{2-s}+(2-s)\lesssim \sqrt{2-s}$. This completes the proof.
	\end{proof}
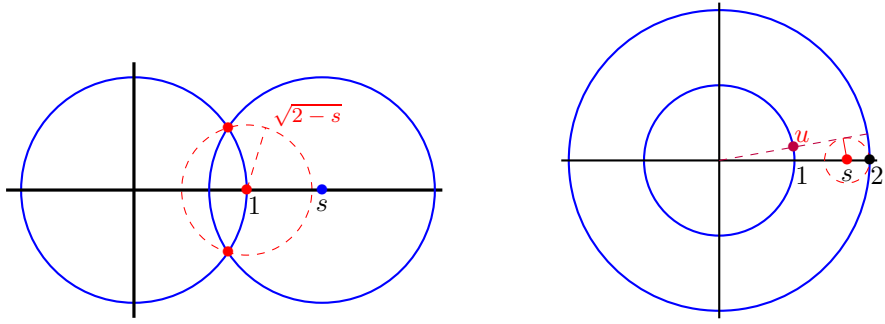
\begin{figure}
	\centering
	\begin{subfigure}[t]{0.46\textwidth}
		\centering
		\begin{tikzpicture}
			\draw[red, dashed] (1.5,0) circle (0.866cm);
			\draw[red, dashed](1.5,0) -- (1.75,0.8291);
			\node[red] at (2.3,1) {\footnotesize $\sqrt{2-s}$};
			\draw[blue, thick] (0,0) circle (1.5cm);
			\draw[very thick] (-1.7,0) -- (4.1,0);
			\draw[blue, thick] (2.5,0) circle (1.5cm);
			\draw[very thick] (0,-1.7) -- (0,1.7);
			\node[red] at (1.5,0) {$\bullet$};
			\node at (1.6,-0.2) {1};
			\node[blue] at (2.5,0) {$\bullet$};
			\node at (2.5,-0.2) {$s$};
			\node[red] at (1.25,0.8291) {$\bullet$};
			\node[red] at (1.25,-0.8291) {$\bullet$};
		\end{tikzpicture}
		\caption{The intersection of the two blue circles is contained in the red disc which is centered at $1$ and its radius can be computed and equals $\sqrt{2-s}$.}
		\label{fig:1a}
	\end{subfigure}
	\hspace{0.05\textwidth}
	\begin{subfigure}[t]{0.46\textwidth}
		\centering
		\begin{tikzpicture}
			\draw[blue, thick] (0,0) circle (1cm);
			\draw[blue, thick] (0,0) circle (2cm);
			\draw[thick] (-2.1,0) -- (2.1,0);
			\draw[thick] (0,-2.1) -- (0,2.1);
			\draw[red, dashed] (1.7,0) circle (0.3cm);
			\draw[purple, dashed] (0,0) -- (1.968611,0.35294);
			\node[purple] at (0.9843055,0.17647) {$\bullet$};
			\node[red] at (1.1,0.32) {$u$};
			\node at (1.7,-0.2) {$s$};
			\node[red] at (1.7,0) {$\bullet$};
			\node at (2,0) {$\bullet$};
			\node at (1.1,-0.2) {1};
			\node at (2.1,-0.2) {2};
			\draw[red] (1.7,0) -- (1.6470,0.2952);
		\end{tikzpicture}
		\caption{$u$ is the furthest vector on $\partial B(0,1)$ from $1$, such that the line segment from $0$ going through $u$ intersects the disc centered at $s$ with radius $2-s$.}
		\label{fig:1b}
	\end{subfigure}
	\caption{Some useful geometric observations.}
	\label{fig:main}
\end{figure}
	\par Let us denote with $T(\alpha,\beta)$, $\alpha,\beta>0$, the pyramid $$T(\alpha,\beta)=\{(x_{1},...,x_{n-1},x_{n})\in\mathbb{R}^{n}:|x_{1}|,...,|x_{n-1}|<\alpha-\frac{\alpha}{\beta}x_{n},0<x_{n}<\beta\}.$$
	In two dimensions $T(\alpha,\beta)$ is an isosceles triangle of base the line segment $(-\alpha,\alpha)\times\{0\}$ and vertex $(0,\beta)$. In three dimensions $T(\alpha,\beta)$ is a square based pyramid with base the square $(-\alpha,\alpha)^{2}\times\{0\}$ and apex $(0,0,\beta)$.
	By a simple trigonometric argument we can observe that for $t\in (\frac{\beta}{2},\beta)$, it is true that 
	\begin{eqnarray}\label{6}
		B((0,...,0,t),\frac{\alpha}{\sqrt{\alpha^{2}+\beta^{2}}}(\beta-t))\subset T(\alpha,\beta)
	\end{eqnarray} 
	We now define the following family of boundary vectors of a convex set $\Omega$. For $\alpha,\beta>0$ and $R>0$ we define $\mathscr{R}(\alpha,\beta,R)$ to be the set of all boundary vectors $y\in\partial\Omega$ such that the following hold.
	\begin{enumerate}
		\item There exists $x\in\mathbb{R}^{n}$ such that $\Omega\subset B(x,R)$ and $|x-y|=R$.
		\item There exists an isometry $I:\mathbb{R}^{n}\to\mathbb{R}^{n}$ such that, $$I(T(\alpha,\beta))\subset \Omega, \quad I(0,0,...,0,\beta)=y,$$ and $I$ maps the line segment $\{(0,0,...,0,t):t\in (0,\beta)\}$ into the line that is perpendicular to the supporting hyperplane of $B(x,R)$ at $y$.
	\end{enumerate}
	The existence of such a pyramid guarantees that we can find balls of suitable radii within 
	$\Omega$, and it also aids in visualizing how these balls can exist within the domain. The specific choice of the pyramid is not essential—we could replace it with any other polygon—but we opt for the pyramid for computational convenience.
	\begin{example*}
		In Figure \ref{fig:2} we see that if $\Omega$ is the ellipse $\frac{x^2}{\gamma^2}+\frac{y^2}{\beta^2}<1$, then the vector $(0,\beta)$ belongs to the class $\mathscr{R}(\alpha,\beta,\beta)$, $\alpha\leq \gamma$. The red triangle is the set $T(\alpha,\beta)$, and the radius of the green circle is $\frac{\alpha}{\sqrt{\alpha^2+\beta^2}}(\beta-t)$.
	\end{example*}
 We now give a lemma that verifies properties \eqref{prop1}--\eqref{prop3} for each $y\in\partial\Omega$ individually that belongs to some set $\mathscr{R}(\alpha,\beta,R)$, by making use of Lemma \ref{disc} and our observation for $T(\alpha,\beta)$.
	\begin{figure}
		\centering
		\begin{tikzpicture}[scale=3] 
			\draw[blue, thick] (0,0) circle (1);
			
			\draw[black] (-1.1,0) -- (1.1,0);
			\draw[black] (0,-1.1) -- (0,1.1);
			
			
			\draw[red, thick] (0,1) -- (0.5,0) -- (-0.5,0) -- cycle;
			\draw[thick] (0,0.6) -- (0.163,0.674);
			\draw[green, thick] (0,0.6) circle (0.179);
			
			\filldraw[black] (0,1) circle (1pt);
			\node at (-0.04,1.1) {$\beta$};
			\filldraw[black] (0.5,0) circle (1pt) node[below] {$\alpha$};
			\filldraw[black] (-0.5,0) circle (1pt) node[below] {$-\alpha$};
			\node at (-0.04,0.6) {$t$};
			\draw[thick] (0,0) ellipse (0.76 and 1);
			\filldraw[black] (0,0) circle (1pt) node[below left] {$0$};
			\filldraw[black] (-0.76,0) circle (1pt) node[below left] {$-\gamma$};
			\filldraw[black] (0.76,0) circle (1pt) node[below right] {$\gamma$};
		\end{tikzpicture}   
		\caption{If $\Omega$ is the ellipse $\frac{x^2}{\gamma^2}+\frac{y^2}{\beta^2}<1$ then the vector $(0,\beta)$ belongs to the class $\mathscr{R}(\alpha,\beta,\beta)$.}
		\label{fig:2}
	\end{figure}
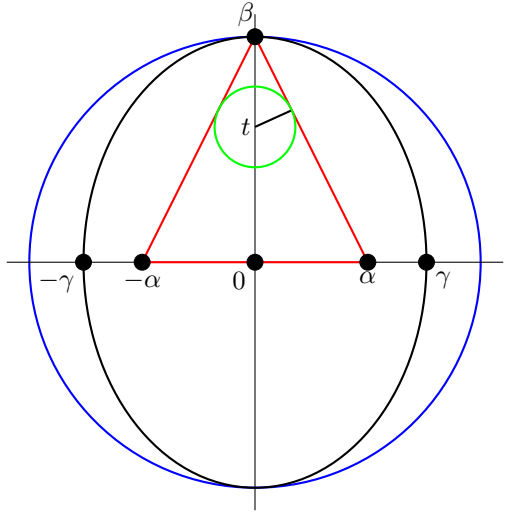
	\begin{lm}\label{indiv}
		Let $\Omega$ be a bounded convex subset of $\mathbb{R}^{n}$. Let us fix $\alpha,\beta,R>0$. Then there exist positive constants $C_1,C_2$ and $k$ such that for every $y\in\mathscr{R}(\alpha,\beta,R)$ and every $\epsilon<k$ we can find $x_{\epsilon}\in\Omega$ such that 
		\begin{enumerate}
			\item $B(x_{\epsilon},C_1\epsilon^2)\subset \Omega$.
			\item $(2\overline{B}(x_{\epsilon},C_1\epsilon^2)-\Omega)\cap\Omega\subset B(y,\epsilon)$.
			\item $\omega_{\Omega}(x)\leq C_2 \epsilon^{n+1}$, for all $x\in 2B(x_{\epsilon},C_1\epsilon^2)$.
		\end{enumerate}
	\end{lm}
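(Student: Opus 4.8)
The plan is to reduce everything, via a single isometry, to the model situation of Lemma~\ref{disc} combined with the pyramid containment \eqref{6}. Fix $y\in\mathscr{R}(\alpha,\beta,R)$ and let $I$ be the isometry from the definition of $\mathscr{R}$, so that $I(T(\alpha,\beta))\subset\Omega$, $I(0,\dots,0,\beta)=y$, and the axis of the pyramid is sent to the inner normal of $B(x,R)$ at $y$, where $\Omega\subset B(x,R)$ and $|x-y|=R$. Since $\omega_\Omega$ is invariant under isometries (it is a convolution of indicators), I may as well assume $I=\mathrm{id}$, i.e.\ $T(\alpha,\beta)\subset\Omega\subset B(x,R)$ with $x=(0,\dots,0,\beta-R)$ and $y=(0,\dots,0,\beta)$.

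First I would locate the center $x_\epsilon$. Apply Lemma~\ref{disc} to the ball $B(x,R)$ (rescale Lemma~\ref{disc} by $R$): there are $C,\epsilon_0>0$ so that for $\epsilon<\epsilon_0$ the point $x_\epsilon:=x+(1-CR^{-1}\epsilon^2)(y-x)$ satisfies
$$\bigl(2\overline{B}(x_\epsilon, CR^{-1}\epsilon^2)-B(x,R)\bigr)\cap B(x,R)\subset B(y,\epsilon).$$
Because $x_\epsilon$ lies on the segment from $x$ to $y$ at distance $CR^{-1}\epsilon^2$ from $y$, i.e.\ at height $t=\beta-CR^{-1}\epsilon^2$ on the pyramid axis (for $\epsilon$ small enough that $t>\beta/2$), the containment \eqref{6} gives $B\bigl((0,\dots,0,t),\frac{\alpha}{\sqrt{\alpha^2+\beta^2}}CR^{-1}\epsilon^2\bigr)\subset T(\alpha,\beta)\subset\Omega$. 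Thus choosing $C_1$ smaller than $\min\{CR^{-1},\frac{\alpha}{\sqrt{\alpha^2+\beta^2}}CR^{-1}\}$ we get $B(x_\epsilon,C_1\epsilon^2)\subset\Omega$, which is (1). For (2), monotonicity of the Minkowski operations in $\Omega\subset B(x,R)$ and $C_1\epsilon^2\le CR^{-1}\epsilon^2$ give
$$\bigl(2\overline{B}(x_\epsilon,C_1\epsilon^2)-\Omega\bigr)\cap\Omega\subset\bigl(2\overline{B}(x_\epsilon,CR^{-1}\epsilon^2)-B(x,R)\bigr)\cap B(x,R)\subset B(y,\epsilon),$$
so (2) holds (shrinking $k\le\epsilon_0$ as needed).

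For (3), I would use the elementary bound $\omega_\Omega(z)=m(\Omega\cap(z-\Omega))\le m(B(x,R)\cap(z-B(x,R)))=\omega_{B(x,R)}(z)$, and by Lemma~\ref{computew} (rescaled) $\omega_{B(x,R)}(z)\approx R^n\bigl(2-|z-2x|/R\bigr)^{(n+1)/2}$ for $|z-2x|\le 2R$. For $z\in 2B(x_\epsilon,C_1\epsilon^2)$ one has $z=2x_\epsilon+w$ with $|w|\le 2C_1\epsilon^2$, so $|z-2x|=|2(x_\epsilon-x)+w|\le 2|x_\epsilon-x|+2C_1\epsilon^2 = 2(R-CR^{-1}\epsilon^2)+2C_1\epsilon^2\le 2R-c\epsilon^2$ for a suitable $c>0$ (using $C_1<CR^{-1}$). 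Hence $2-|z-2x|/R\lesssim \epsilon^2/R^2$, and therefore $\omega_\Omega(z)\lesssim R^n(\epsilon^2/R^2)^{(n+1)/2}\approx_{R} \epsilon^{n+1}$, giving (3) with a constant $C_2$ depending only on $\alpha,\beta,R,n$.

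The main obstacle is entirely in step (2)--(1): making sure that the \emph{same} $x_\epsilon$ and the \emph{same} radius $\sim\epsilon^2$ simultaneously work for the "lower" containment (ball fits inside $\Omega$, which needs the pyramid and \eqref{6}) and the "upper" containment (the dilated-minus-$\Omega$ set stays in $B(y,\epsilon)$, which needs $\Omega$ sitting inside $B(x,R)$). The quadratic scaling $C_1\epsilon^2$ is forced: the pyramid guarantees only a ball of radius linear in the height-defect $\beta-t$, while Lemma~\ref{disc} needs the defect to be of order $\epsilon^2$, so the available inner ball has radius $\sim\epsilon^2$ — and this is exactly consistent, which is why the lemma is true. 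Everything else (isometry invariance of $\omega_\Omega$, the rescalings of Lemmas~\ref{disc} and~\ref{computew}, uniformity of constants over $\mathscr{R}(\alpha,\beta,R)$ since $I$ never enters the estimates) is routine.
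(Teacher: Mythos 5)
Your proposal follows essentially the same route as the paper: normalize so that the enclosing ball is the model ball, place $x_\epsilon$ on the segment from the ball's center to $y$ at distance $\sim\epsilon^2$ from $y$, get the inner ball from the pyramid via \eqref{6}, get containment (2) from Lemma \ref{disc} plus monotonicity, and get (3) from $\omega_\Omega\le\omega_{B(x,R)}$ and Lemma \ref{computew}. The only flaw is in your verification of (3): you bound $|z-2x|$ from \emph{above} by $2R-c\epsilon^2$ and then conclude $2-|z-2x|/R\lesssim\epsilon^2$, which does not follow — that conclusion requires a \emph{lower} bound $|z-2x|\ge 2R-c'\epsilon^2$, obtained from the reverse triangle inequality $|2(x_\epsilon-x)+w|\ge 2|x_\epsilon-x|-|w|$; with that one-line fix the argument is complete and matches the paper's.
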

	\begin{proof}
		Let, without loss of generality $x=0$, $R=1$ and $y=(1,0,...,0)$, where $x$ is the center of the ball that is introduced in the definition of $\mathscr{R}(\alpha,\beta,R)$. Let $C,\epsilon_{0}$ be as in Lemma \ref{disc} and set $x_{\epsilon}=(1-C\epsilon^{2})y$ and $t_{\epsilon}=C\epsilon^{2}$. First, we notice that $|x_{\epsilon}-y|= C\epsilon^{2}$.
		By our observation \eqref{6} for $T(\alpha,\beta)$, for each $x_{\epsilon}$ with $|x_{\epsilon}-y|<\frac{\beta}{2}$, or in other words for $ \epsilon<\sqrt{\frac{\beta}{2C}}$ we have that 
		$$B(x_{\epsilon},K(\alpha,\beta)\epsilon^{2})\subset I(T(\alpha,\beta))\subset \Omega,$$ where $K(\alpha,\beta)$ is a suitable constant.  By Lemma \ref{disc} we also have that
		$$(2\overline{B}(x_{\epsilon},t_{\epsilon})-\Omega)\cap\Omega\subset (2\overline{B}(x_{\epsilon},t_{\epsilon})-B(0,1))\cap B(0,1)\subset B(y,\epsilon).$$ Thus for $C_1=\min(C,K(\alpha,\beta))$ we have proven $(1)$ and $(2)$. Finally, to verify $(3)$, since $\omega_{\Omega}\leq \omega_{B(0,1)}$, it suffices to prove it for $\Omega=B(0,1)$. Let $ru\in B(x_{\epsilon},C_1\epsilon^2)$, $r$ positive and $u$ on the unit circle $\partial B(0,1)$.
		Then by Lemma \ref{computew} it suffices to bound $1-r$ by a constant multiple of $\epsilon^{2}$. This follows by the fact that $r=|x_{\epsilon}+C_1\epsilon^2v|$ for some $v\in B(0,1)$. 
	\end{proof}
	Therefore, we are now able to apply Lemma \ref{generalnegation} to sets with multiple boundary vectors that satisfy Lemma \ref{indiv}.
	\begin{cor}\label{cor 1}
		Let $\Omega$ be a convex bounded subset of $\mathbb{R}^{n}$. Let $\alpha,\beta,R>0$ and suppose that for $\epsilon>0$ we can find $N$ vectors $y_{1},...,y_{N}\in\mathscr{R}(\alpha,\beta,R)$ of mutual distance greater than $2\epsilon$. Then if Nehari's theorem holds for some $p\geq 1$, there exists a constant $C=C(\Omega,p,d,\alpha,\beta,R)$ such that 
		$$N^{\frac{d}{n+2d}-\frac{1}{p}}\epsilon^{\frac{n-1}{p}}\leq C.$$
	\end{cor}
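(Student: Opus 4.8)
The plan is to simply feed the data of the corollary into Lemma~\ref{generalnegation}, using Lemma~\ref{indiv} to produce the local balls and to control the quantity $a$. Fix $\epsilon>0$ and vectors $y_1,\dots,y_N\in\mathscr{R}(\alpha,\beta,R)$ with $|y_i-y_j|>2\epsilon$ for $i\neq j$. Let $C_1,C_2,k$ be the constants from Lemma~\ref{indiv} attached to the parameters $\alpha,\beta,R$; we may assume $\epsilon<k$, since for $\epsilon\geq k$ the claimed inequality is trivial once $C$ is taken large enough (the left-hand side is bounded by a power of $N$ times a power of $\epsilon$, and there are only finitely many $y_i\in\mathscr R(\alpha,\beta,R)$ of mutual distance $>2k$, so $N$ is bounded in that regime). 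For each $i$, Lemma~\ref{indiv} supplies a center $x_i:=x_\epsilon^{(i)}\in\Omega$ with $B(x_i,C_1\epsilon^2)\subset\Omega$, with $(2\overline B(x_i,C_1\epsilon^2)-\Omega)\cap\Omega\subset B(y_i,\epsilon)$, and with $\omega_\Omega(x)\leq C_2\epsilon^{n+1}$ for every $x\in 2B(x_i,C_1\epsilon^2)$.

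Next I would check the three hypotheses of Lemma~\ref{generalnegation} with radius $r:=C_1\epsilon^2$. Property~\eqref{prop2} is exactly the first conclusion of Lemma~\ref{indiv}, and property~\eqref{prop3} is the second conclusion. For property~\eqref{prop1} I use that $B(y_i,\epsilon)\cap B(y_j,\epsilon)=\emptyset$ whenever $|y_i-y_j|>2\epsilon$, which holds by assumption. Thus Lemma~\ref{generalnegation} applies and, recording $a=\max_i\sup_{x\in 2B(x_i,r)}\omega_\Omega(x)\leq C_2\epsilon^{n+1}$ and $r^n=C_1^n\epsilon^{2n}$, gives, for every $d>0$,
\begin{equation*}
	N^{\frac{d}{n+2d}-\frac1p}\left(\frac{r^n}{a}\right)^{\frac1p}\leq C(\Omega,p,d).
\end{equation*}
It remains to simplify $(r^n/a)^{1/p}$. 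We have $r^n/a\geq C_1^n\epsilon^{2n}/(C_2\epsilon^{n+1})=(C_1^n/C_2)\epsilon^{n-1}$, so $(r^n/a)^{1/p}\gtrsim\epsilon^{(n-1)/p}$, with implied constant depending only on $n,p,\alpha,\beta,R$ (through $C_1,C_2$). Substituting yields
\begin{equation*}
	N^{\frac{d}{n+2d}-\frac1p}\epsilon^{\frac{n-1}{p}}\leq C(\Omega,p,d,\alpha,\beta,R),
\end{equation*}
which is the assertion.

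There is no real obstacle here; the only points requiring a word of care are the reduction of the $\epsilon\geq k$ case to a finiteness statement (so that the stated inequality holds with a single constant valid for all $\epsilon$), and the bookkeeping of which parameters the constants depend on—one must make sure that $C_1,C_2,k$ from Lemma~\ref{indiv} genuinely depend only on $\alpha,\beta,R$ (and $\Omega$, $n$) and not on $\epsilon$ or on the particular $y_i$, which is precisely the content of Lemma~\ref{indiv}. Everything else is a direct substitution into Lemma~\ref{generalnegation}.
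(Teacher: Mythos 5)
Your proposal is correct and follows the same route as the paper: invoke Lemma \ref{indiv} to produce the centers $x_i$ with radius $r\approx\epsilon^2$ and the bound $a\lesssim\epsilon^{n+1}$, check the three hypotheses of Lemma \ref{generalnegation} (disjointness of the $B(y_i,\epsilon)$ coming from the mutual-distance assumption), and substitute to get $(r^n/a)^{1/p}\gtrsim\epsilon^{(n-1)/p}$. The extra care you take with the regime $\epsilon\geq k$ and with tracking the parameter dependence of the constants is harmless bookkeeping that the paper leaves implicit.
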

	\begin{proof}
		By Lemma \ref{indiv}, we can find $x_{1},...,x_{N}\in \Omega$ and $s\approx \epsilon^{2}$ such that 
		\begin{enumerate}
			\item $B(x_{i},s)\subset \Omega$.
			\item $(2\overline{B}(x_{i},s)-\Omega)\cap\Omega\subset B(y_{i},\epsilon)$.
			\item $\omega_{\Omega}(x)\lesssim \epsilon^{n+1}$, for all $x\in 2B(x_{i},s)$,
		\end{enumerate}
		for $i=1,...,N$. Therefore by Lemma \ref{generalnegation}, we can take $a\approx\epsilon^{n+1}$ and $r\approx \epsilon^{2}$, thus there exists a constant $C=C(\Omega,p,d,\alpha,\beta,R)$ such that 
		$$ N^{\frac{d}{n+2d}-\frac{1}{p}}\epsilon^{\frac{n-1}{p}}\lesssim N^{\frac{d}{n+2d}-\frac{1}{p}}(\dfrac{r^{n}}{a})^{\frac{1}{p}}\leq C,$$ as desired.
	\end{proof}
   Controlling $N$ from bellow by a power of $\epsilon$ and then allowing $\epsilon$ to approach $0$ in Corollary \ref{cor 1}, we are able to negate Nehari's theorem for sufficiently large $p$.
	\begin{cor}\label{cor 2}
		Let $\Omega$ be a convex bounded subset of $\mathbb{R}^{n}$. Suppose that for some $\alpha,\beta,R>0$ there exist $k,C>0$ such that for every $\epsilon\in (0,1)$ we can find $N\geq C \epsilon^{-k}$ vectors $y_{1},...,y_{N}\in\mathscr{R}(\alpha,\beta,R)$ of mutual distance greater than $2\epsilon$. Then Nehari's theorem fails for all $p>\frac{2}{k}(n+k-1)$.
	\end{cor}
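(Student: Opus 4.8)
The plan is to feed the hypothesis into Corollary~\ref{cor 1} and let $N\to\infty$ along the sequence $N\approx\epsilon^{-k}$, after first fixing the free parameter $d$ appearing there so as to push the threshold for $p$ down to $\frac{2}{k}(n+k-1)$. Argue by contradiction: suppose Nehari's theorem holds for some $p>\frac{2}{k}(n+k-1)$. The key elementary observation is that
$$\frac{2}{k}(n+k-1)=\frac{n+k-1}{k}\cdot 2=\lim_{d\to\infty}\frac{(n+k-1)(n+2d)}{kd},$$
so one may fix a single $d>0$, large enough, for which $p>\dfrac{(n+k-1)(n+2d)}{kd}$. Since $\frac{n+k-1}{k}\geq 1$, this choice also forces $p>\dfrac{n+2d}{d}$, i.e. $\dfrac{d}{n+2d}-\dfrac1p>0$; this positivity is exactly what will make ``more vectors'' work in our favor.

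Next, for every $\epsilon\in(0,1)$ the hypothesis provides $N=N(\epsilon)\geq C\epsilon^{-k}$ vectors in $\mathscr{R}(\alpha,\beta,R)$ of mutual distance greater than $2\epsilon$, so Corollary~\ref{cor 1} (with this fixed $d$) yields a constant $C_1=C_1(\Omega,p,d,\alpha,\beta,R)$, independent of $\epsilon$, such that $N^{\frac{d}{n+2d}-\frac1p}\epsilon^{\frac{n-1}{p}}\leq C_1$. Because the exponent $\frac{d}{n+2d}-\frac1p$ is positive, the left-hand side only decreases if $N$ is replaced by the smaller quantity $C\epsilon^{-k}$, giving
$$\epsilon^{\,-k\left(\frac{d}{n+2d}-\frac1p\right)+\frac{n-1}{p}}\ \lesssim\ C_1$$
uniformly in $\epsilon\in(0,1)$.

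Finally, I would check that the exponent of $\epsilon$ on the left is strictly negative. After clearing denominators, $-k\left(\frac{d}{n+2d}-\frac1p\right)+\frac{n-1}{p}<0$ is equivalent to $\frac{kd}{n+2d}>\frac{n+k-1}{p}$, i.e. to $p>\frac{(n+k-1)(n+2d)}{kd}$, which is precisely the inequality we arranged when choosing $d$. Hence the left-hand side tends to $+\infty$ as $\epsilon\to0^+$, contradicting the uniform bound $C_1$. Therefore Nehari's theorem fails for every $p>\frac{2}{k}(n+k-1)$. The analytic substance has already been absorbed into Corollary~\ref{cor 1}, so there is no genuine obstacle here beyond bookkeeping; the one point to be careful about is that $d$ must be selected (and then held fixed) \emph{after} $p$, so that the constant $C_1$ produced by Corollary~\ref{cor 1} is genuinely independent of $\epsilon$ when we send $\epsilon\to 0$.
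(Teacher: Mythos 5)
Your proposal is correct and follows essentially the same route as the paper: substitute $N\gtrsim\epsilon^{-k}$ into the bound of Corollary~\ref{cor 1} and let $\epsilon\to0$, observing that the resulting threshold $\frac{(n+k-1)(n+2d)}{kd}$ tends to $\frac{2}{k}(n+k-1)$ as $d\to\infty$. Your version is slightly more careful than the paper's in two respects that are worth keeping: you fix $d$ after $p$ so the constant from Corollary~\ref{cor 1} is honestly independent of $\epsilon$, and you verify the positivity of the exponent $\frac{d}{n+2d}-\frac1p$ needed to replace $N$ by the lower bound $C\epsilon^{-k}$ without reversing the inequality.
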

	\begin{proof}
		By Corollary \ref{cor 1} we get that $$\epsilon^{-\frac{d k}{n+2d}+\frac{k}{p}}\epsilon^{\frac{n-1}{p}}\leq C(\Omega,p,d,\alpha,\beta,R).$$ By taking $\epsilon\to 0$, the left part of the inequality diverges whenever $-\frac{d k}{n+2d}+\frac{k}{p}+\frac{n-1}{p}<0$ or equivalently $p>\frac{n+2d}{d k}(n+k-1)$. Since this holds for all $d >0$, we get the result by taking $d\to \infty$.
	\end{proof}
	Using Corollary \ref{cor 2} we can now prove Theorem \ref{ball}. \\
	\\ \medspace \textbf{Proof of Theorem \ref{ball}.} We will prove the result only for bounded sets, since the argument is local. Let $K\subset \partial\Omega$ be open and $C^2$. Since the curvature is non-zero, we can assume that the curvature at $K$ is bounded from below, and thus it is evident that we can find uniform $\alpha,\beta$ and $R$, such that every point in $K$ also belongs to $\mathscr{R}(\alpha,\beta,R)$. Therefore, by Corollary \ref{cor 2}, it suffices, for every $\epsilon\in (0,1)$ to find $N \gtrsim \epsilon^{1-n}$ vectors in $K$ of distance greater than $\epsilon.$ This is trivial since $K$ is a $n-1$ dimensional $C^2$ surface.\qed \\
	\par This argument cannot be directly applied for every bounded convex set that is not a polygon. Consider for example a set that is the infinite union of line segments on the plane, such that the angles become bigger, approaching $\pi$. In this case we are not able to find a uniform $R>0$, thus a more detailed calculation is required that also contains the behavior of $R$ with respect to $N$ and $\epsilon$, which will give weaker and case-specific results.
	
	\section{Hardy's inequality}\label{sec3}
	Let $\Omega$ be a convex subset of $\mathbb{R}^{n}$, and from now on when we refer to a convex set we will also assume that it does not contain lines. Then, $\omega_{\Omega}(x)$ exists for all $x\in\mathbb{R}^{n}$. For convenience of notation, we understand the function $\omega_{\Omega}^{-1}$ to be zero outside $2\Omega$. We are interested in the following question; for which convex sets $\Omega$ there exists a constant $C>0$ such that the inequality \begin{eqnarray} \label{7}
		\int_{\mathbb{R}^{n}}\dfrac{|\widehat{f}(x)|}{\omega_{\Omega}(x)}dx\leq C\|f\|_{L^{1}},
	\end{eqnarray}
	holds for all $f\in\PW^{1}(2\Omega)$? If such an inequality holds we will refer to it as Hardy's inequality for $\Omega$. 
	\par For a convex set $\Omega$, Hankel operators on the Paley--Wiener space $\PW(\Omega)$ are integral operators on $L^2(\mathbb{R}^n)$ with kernel $\widehat{\phi}(x+y)\chi_{\Omega}(x)\chi_{\Omega}(y)$, $\phi$ being the symbol of the operator, and thus we can compute its Hilbert--Schmidt norm to be 
	$$\|\Ha_\phi\|_{S^2}=\|\widehat{\phi}\sqrt{\omega_{\Omega}}\|_{L^2}.$$
	Using this fact we start by proving that inequality \eqref{7} implies Nehari's theorem for $p=2$. 
	\begin{lm}\label{neh2}
		Let $\Omega$ be a convex and bounded subset of $\mathbb{R}^n$. Then the inequality $$\int_{\mathbb{R}^{n}}\dfrac{|\widehat{f}(x)|^2}{\omega_{\Omega}(x)}dx\leq C\|f\|_{L^{1}}^2,$$ for all $f\in\PW^1(2\Omega)$ and some $C=C(\Omega)>0$ is equivalent to Nehari's theorem for $\Omega$ for $p=2$. Consequently, Hardy's inequality implies Nehari's theorem for $\Omega$ for $p=2$.
	\end{lm}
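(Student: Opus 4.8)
The plan is to prove the equivalence by two dual arguments resting on the identity $\|\Ha_\phi\|_{S^2}=\|\widehat\phi\sqrt{\omega_\Omega}\|_{L^2}$ and on inequality \eqref{1} together with Lemma \ref{implNehari}; the ``consequently'' then follows in one line. Throughout I write $\langle f,\phi\rangle=\int_{\mathbb R^n}f\overline\phi=\int_{\mathbb R^n}\widehat f\,\overline{\widehat\phi}$, and I use that, $\Omega$ being bounded, $2\Omega$ is a bounded open convex set on which $\omega_\Omega$ is continuous and strictly positive; in particular $\omega_\Omega$ is bounded below on compact subsets of $2\Omega$, and every $f\in\PW^1(2\Omega)$ has $\supp\widehat f$ compact in $2\Omega$, hence at positive distance from $\partial(2\Omega)$.

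First I would show that the $L^2$ inequality of the statement implies Nehari's theorem for $\Omega$ for $p=2$. Let $\phi$ satisfy $\Ha_\phi\in S^2(\PW(\Omega))$, i.e. $\|\widehat\phi\sqrt{\omega_\Omega}\|_{L^2}<\infty$, so in particular $\widehat\phi\in L^2_{\mathrm{loc}}(2\Omega)$. For any $f$ with $\widehat f\in C_c^\infty(2\Omega)$ (so $f$ is Schwartz and lies in $\PW^1(2\Omega)$), the Cauchy--Schwarz inequality and the hypothesis give
$$|\langle f,\phi\rangle|=\Big|\int_{2\Omega}\frac{\widehat f}{\sqrt{\omega_\Omega}}\,\overline{\widehat\phi}\,\sqrt{\omega_\Omega}\Big|\le\Big\|\frac{\widehat f}{\sqrt{\omega_\Omega}}\Big\|_{L^2}\,\|\widehat\phi\sqrt{\omega_\Omega}\|_{L^2}\le\sqrt C\,\|f\|_{L^1}\,\|\Ha_\phi\|_{S^2}.$$
Thus $f\mapsto\langle f,\phi\rangle$ is a bounded functional of norm at most $\sqrt C\|\Ha_\phi\|_{S^2}$ on this subspace of $L^1$, and since $(L^1)^\ast=L^\infty$, the Hahn--Banach theorem produces $\psi\in L^\infty(\mathbb R^n)$ with $\|\psi\|_{L^\infty}\le\sqrt C\|\Ha_\phi\|_{S^2}$ and $\langle f,\psi\rangle=\langle f,\phi\rangle$ for all such $f$; running $\widehat f$ through $C_c^\infty(2\Omega)$ shows that $\widehat\psi$ and $\widehat\phi$ agree as distributions on $2\Omega$, so $\Ha_\phi=\Ha_\psi$ has a bounded symbol (indeed with the quantitative bound $\|\psi\|_{L^\infty}\le\sqrt C\|\Ha_\phi\|_{S^2}$).

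Conversely, assuming Nehari's theorem for $\Omega$ for $p=2$, Lemma \ref{implNehari} supplies $C_1=C_1(\Omega)>0$ with $\inf\{\|\psi\|_{L^\infty}:\widehat\psi|_{2\Omega}=\widehat\phi|_{2\Omega}\}\le C_1\|\Ha_\phi\|_{S^2}$ whenever $\Ha_\phi\in S^2$. Fix $f$ with $\widehat f\in C_c^\infty(2\Omega)$ and compute the (finite) quantity $\|\widehat f/\sqrt{\omega_\Omega}\|_{L^2}$ as the supremum of $|\int_{2\Omega}(\widehat f/\sqrt{\omega_\Omega})\overline g|$ over $g$ in the unit ball of $L^2(2\Omega)$, where it suffices to let $g$ range over the dense subclass of continuous functions with compact support in the interior of $2\Omega$. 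For each such $g$ set $\widehat{\phi_g}=(g/\sqrt{\omega_\Omega})\chi_{2\Omega}$, a compactly supported continuous function, so $\phi_g\in L^\infty$, $\|\Ha_{\phi_g}\|_{S^2}=\|g\|_{L^2(2\Omega)}\le 1$, and $\int_{2\Omega}(\widehat f/\sqrt{\omega_\Omega})\overline g=\langle f,\phi_g\rangle$. Then by \eqref{1} followed by Lemma \ref{implNehari},
$$\Big|\int_{2\Omega}\frac{\widehat f}{\sqrt{\omega_\Omega}}\,\overline g\Big|=|\langle f,\phi_g\rangle|\le C_1\,\|f\|_{L^1}\,\|\Ha_{\phi_g}\|_{S^2}\le C_1\,\|f\|_{L^1},$$
and taking the supremum over $g$ gives $\|\widehat f/\sqrt{\omega_\Omega}\|_{L^2}\le C_1\|f\|_{L^1}$ for all such $f$. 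A routine dilation-and-mollification argument (as in the Introduction, which preserves the $L^1$ norm) produces such Schwartz functions approximating an arbitrary $f\in\PW^1(2\Omega)$ in $L^1$, and Fatou's lemma then upgrades the inequality to all of $\PW^1(2\Omega)$; squaring yields the stated $L^2$ inequality with $C=C_1^2$. The ``consequently'' is then immediate: Hardy's inequality \eqref{7} gives, for $f\in\PW^1(2\Omega)$, that $\int_{\mathbb R^n}|\widehat f|^2/\omega_\Omega\le\|\widehat f\|_{L^\infty}\int_{\mathbb R^n}|\widehat f|/\omega_\Omega\le\|f\|_{L^1}\cdot C\|f\|_{L^1}$, which is the equivalent form of Nehari's theorem for $p=2$ just established.

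I expect the substance of the argument to lie entirely in the two displayed estimates; the remaining points are bookkeeping that must nonetheless be handled with care: checking that the test symbols $\phi_g$ genuinely generate Hilbert--Schmidt Hankel operators on $\PW(\Omega)$ --- which is precisely why $g$ is restricted so that $\widehat{\phi_g}$ stays compactly supported inside the open set $2\Omega$, so that the apparent singularity of $1/\omega_\Omega$ at $\partial(2\Omega)$ never intervenes --- verifying that every pairing $\langle f,\phi\rangle$ converges and transforms correctly under Plancherel, and confirming that the density-plus-Fatou step legitimately passes from functions with Fourier transform in $C_c^\infty(2\Omega)$ to all of $\PW^1(2\Omega)$.
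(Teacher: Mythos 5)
Your proposal is correct and follows essentially the same route as the paper: Cauchy--Schwarz plus Hahn--Banach for the forward implication, and a duality argument via Lemma \ref{implNehari} with test symbols of the form $\widehat{\phi_g}=g/\sqrt{\omega_\Omega}$, followed by the dilation-and-Fatou step, for the converse. The only cosmetic difference is that the paper skips the supremum over the $L^2$ unit ball by pairing $f$ directly with the extremal choice $\widehat g=\overline{\widehat f}/\omega_\Omega$ (for $f$ with Fourier support in $2\Omega_r$) and dividing through, which is the same computation.
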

	\begin{proof}
		It is evident that Hardy's inequality implies the inequality \begin{equation}\label{Helson} \int_{\mathbb{R}^{n}}\dfrac{|\widehat{f}(x)|^2}{\omega_{\Omega}(x)}dx\leq C\|f\|_{L^{1}}^2,
			\end{equation} and thus it suffices to prove the equivalence.
		Suppose that inequality \eqref{Helson} holds for $\Omega$. Then, for two functions $f,\phi$, with $\supp\widehat{f}\subset 2\Omega$, by the Cauchy--Schwarz inequality and \eqref{Helson} we have that \begin{eqnarray}
			|\langle \phi,f\rangle|&=&\big|\int_{\mathbb{R}^{n}}\widehat{\phi}(x)\overline{\widehat{f}}(x)dx\big|=\big|\int_{\mathbb{R}^{n}}\widehat{\phi}(x)\sqrt{\omega_{\Omega}(x)}\dfrac{\overline{\widehat{f}}(x)}{\sqrt{\omega_{\Omega}(x)}}dx\big| \nonumber \\
			&\leq&\|\widehat{\phi}\sqrt{\omega_{\Omega}}\|_{L^2} \|\dfrac{\widehat{f}}{\sqrt{\omega_{\Omega}}}\|_{L^2} \leq C\|f\|_{L^{1}}\|\Ha_{\phi}\|_{S^{2}}. \nonumber
		\end{eqnarray}
		Thus, if $\Ha_\phi \in S^2(\PW(\Omega))$, the operator $T_\phi(f)=\langle f,\phi\rangle$ is a bounded functional in $\PW^{1}(2\Omega)$, and by the Hahn--Banach theorem, $T_\phi$ extends continuously on $L^{1}(\mathbb{R}^{n})$, thus there exists a bounded function $\psi\in L^{\infty}(\mathbb{R}^{n})$ such that $T_\phi(f)=\langle f,\psi\rangle$ for all $f\in L^{1}(\mathbb{R}^n)$, hence $\Ha_\phi=\Ha_\psi$. 
		\par Now for the converse, suppose that Nehari's theorem holds for $\Omega$ for $p=2$. Let us fix $z\in \Omega$ and for $r\in (0,1)$ we set $\Omega_r=r\Omega+(1-r)z$. For $f\in \PW^1(2\Omega_r)$, we set $g$ such that $\widehat{g}=\dfrac{\overline{\widehat{f}}}{\omega_{\Omega}}$. Then, Lemma \ref{implNehari} gives that,
		$$\int_{\mathbb{R}^n}\dfrac{|\widehat{f}(x)|^2}{\omega_{\Omega}(x)}dx=\langle f, g \rangle \leq C\|f\|_{L^{1}} \|\Ha_g\|_{S^{2}}= C\|f\|_{L^{1}} \|\widehat{g}\omega_{\Omega}^{\frac{1}{2}}\|_{L^{2}}=C\|f\|_{L^{1}} \|\widehat{f}\omega_{\Omega}^{-\frac{1}{2}}\|_{L^{2}}.$$
		Dividing both sides by $\|\widehat{f}\omega_{\Omega}^{-\frac{1}{2}}\|_{L^{2}}$ we get that
		$$ \left(\int_{\mathbb{R}^n}\dfrac{|\widehat{f}(x)|^2}{\omega_{\Omega}(x)}dx\right)^{\frac{1}{2}} \leq C\|f\|_{L^{1}},$$ for all $f\in \PW^1(2\Omega_r)$. Note that for the above division, $\|\widehat{f}\omega_{\Omega}^{-\frac{1}{2}}\|_{L^{2}}<\infty$ since $f\in \PW^1(2\Omega_r)$. 
		 To extend this to all $f\in \PW^1(2\Omega)$, we apply the above inequality to $f_r$, $\widehat{f}_r(x)=\widehat{f}(\frac{x-2(1-r)z}{r})$. Since $\|f_r\|_{L^1}=\|f\|_{L^1}$ and $f_r\in\PW^1(2\Omega_r)$, by Fatou's lemma we get that for $f\in\PW^1(2\Omega)$,
		$$ \int_{\mathbb{R}^n}\dfrac{|\widehat{f}(x)|^2}{\omega_{\Omega}(x)}dx\leq \liminf_{r\to 1}\int_{\mathbb{R}^n}\dfrac{|\widehat{f}_r(x)|^2}{\omega_{\Omega}(x)}dx\leq \liminf_{r\to 1}C\|f_r\|^2_{L^1}=C\|f\|^2_{L^1},$$
		as desired.
	\end{proof}
    For the case of the real half-line $\mathbb{R}_{+}$, it is known that Hardy's inequality holds, and is a consequence of Nehari's theorem. For the sake of completeness we provide a proof.
    \begin{lm}\label{Hardy}
    	Hardy's inequality holds for $\mathbb{R}_{+}$ with constant $\pi$, i.e. $$\int_{0}^{\infty}\dfrac{|\widehat{f}(x)|}{x}dx\leq \pi\|f\|_{L^{1}},$$
    	for all $f\in\PW^{1}(\mathbb{R}_{+})$.
    \end{lm}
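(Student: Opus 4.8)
The plan is to imitate Vukoti\'{c}'s factorization proof of Carleman's inequality, with the disc replaced by the upper half-plane and Carleman's inequality replaced by Hilbert's integral inequality. First I would use that $\PW^{1}(\mathbb{R}_{+})$ is, via the Paley--Wiener theorem, precisely the Hardy space $H^{1}$ of the upper half-plane (identified with its boundary values), with $\|f\|_{H^{1}}=\|f\|_{L^{1}(\mathbb{R})}$. Hence $f$ admits a Riesz factorization $f=gh$, where $g,h$ lie in the Hardy space $H^{2}$ of the upper half-plane and $\|g\|_{L^{2}}=\|h\|_{L^{2}}=\|f\|_{L^{1}}^{1/2}$. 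Then $\widehat{g},\widehat{h}\in L^{2}(\mathbb{R})$ are both supported in $[0,\infty)$, and since $g,h\in L^{2}$ the product $gh$ lies in $L^{1}$ with $\widehat{f}=\widehat{g}\ast\widehat{h}$; the support restriction turns this into
$$\widehat{f}(x)=\int_{0}^{x}\widehat{g}(x-y)\,\widehat{h}(y)\,dy,\qquad x>0.$$

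Next I would bound, using the triangle inequality under the integral and then the substitution $(u,w)=(y,x-y)$, which maps $\{0<y<x<\infty\}$ bijectively onto $(0,\infty)^{2}$ with unit Jacobian,
$$\int_{0}^{\infty}\frac{|\widehat{f}(x)|}{x}\,dx\leq\int_{0}^{\infty}\int_{0}^{\infty}\frac{|\widehat{g}(w)|\,|\widehat{h}(u)|}{u+w}\,du\,dw.$$
All integrands being nonnegative, Tonelli's theorem justifies the interchanges and the change of variables with no convergence hypotheses. Finally, Hilbert's integral inequality bounds the right-hand side by $\pi\,\|\widehat{g}\|_{L^{2}}\|\widehat{h}\|_{L^{2}}$, with $\pi$ the sharp constant, and Plancherel together with the choice of factorization gives $\|\widehat{g}\|_{L^{2}}\|\widehat{h}\|_{L^{2}}=\|g\|_{L^{2}}\|h\|_{L^{2}}=\|f\|_{L^{1}}$. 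This yields $\int_{0}^{\infty}|\widehat{f}(x)|/x\,dx\leq\pi\|f\|_{L^{1}}$.

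The proof contains no hard estimate: the constant $\pi$ is produced entirely by Hilbert's inequality, and the only substantive inputs are the Paley--Wiener identification of $\PW^{1}(\mathbb{R}_{+})$ with $H^{1}$ of the half-plane and the Riesz factorization of the latter, which (as for the disc) is itself stronger than Nehari's theorem. Accordingly the closest thing to an obstacle is two routine verifications: that the distributional Fourier transform of the $L^{1}$ product $gh$ genuinely coincides with the continuous bounded convolution $\widehat{g}\ast\widehat{h}$, and the support bookkeeping producing the one-sided integral above. If one prefers a purely Nehari-based route, it is also available: by duality $\int_{0}^{\infty}|\widehat{f}(x)|/x\,dx=\langle f,\psi\rangle$ for a bounded $\psi$ whose Fourier transform on $(0,\infty)$ is a unimodular multiple $v(x)/x$ of $1/x$, the Hankel operator with that symbol has kernel dominated by $1/(x+y)$ and hence operator norm at most $\pi$ again by Hilbert's inequality, and the sharp form of Nehari's theorem for $\mathbb{R}_{+}$ furnishes such a $\psi$ with $\|\psi\|_{L^{\infty}}\leq\pi$; I would, however, present the factorization argument as the main proof since it is shortest and self-contained.
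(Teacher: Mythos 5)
Your proof is correct and follows essentially the same route as the paper: identify $\PW^{1}(\mathbb{R}_{+})$ with $H^{1}$ of the upper half-plane, apply the Riesz factorization $f=gh$ with $\|f\|_{L^{1}}=\|g\|_{L^{2}}\|h\|_{L^{2}}$, and bound the resulting bilinear expression with sharp constant $\pi$. The only (cosmetic) difference is in the last step: you invoke Hilbert's integral inequality on the frequency side, while the paper pairs $GH$ (where $\widehat{G}=|\widehat{g}|$, $\widehat{H}=|\widehat{h}|$) against the explicit bounded function $\psi(x)=i\pi\sgn(x)$ with $\widehat{\psi}=1/x$ on $\mathbb{R}_{+}$ — two formulations of the same fact, as you yourself note in your closing remarks.
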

\begin{proof}
	Since $\PW^1(\mathbb{R}_{+})$ can be identified with the Hardy space on the upper half plane $H^1(\mathbb{R})$, by the Riesz factorization theorem every $f\in\PW^1(\mathbb{R}_{+})$ can be written as $f=gh$, $g,h\in\PW(\mathbb{R}_{+})$ and $\|f\|_{L^1}=\|g\|_{L^2}\|h\|_{L^2}$. Let $\psi(x)=i\pi\sgn(x)$. It is known that $\widehat{\psi}=$p.v.$\frac{1}{x}$, and $\widehat{\psi}=\frac{1}{x}$ in $\mathbb{R}_{+}$ in the distributional sense. Let us set $\widehat{G}=|\widehat{g}|$ and $\widehat{H}=|\widehat{h}|$. Then, we have that
	\begin{eqnarray}
		\int_{0}^{\infty}\dfrac{|\widehat{f}(x)|}{x}dx&\leq& 	\int_{0}^{\infty}\dfrac{\widehat{G}\ast \widehat{H}(x)}{x}dx=\langle GH,\psi\rangle \leq \|G\|_{L^2}\|H\|_{L^2}\|\psi\|_{L^{\infty}} \nonumber \\
		&=& \pi\|g\|_{L^2}\|h\|_{L^2}=\pi\|f\|_{L^1}. \nonumber 
	\end{eqnarray} 
The proof is complete.
\end{proof}
 The next lemma demonstrates a technique that we will use again in Lemma \ref{decomp} and shows that Hardy's inequality holds for intervals.
	\begin{lm}\label{interval}
		Let $I$ denote the interval $(-1/2,1/2)$. Then Hardy's inequality holds for $I$, i.e. there exists $C>0$ such that
		$$\int_{-1}^{1}\dfrac{|\widehat{f}(x)|}{1-|x|}dx\leq C\|f\|_{L^1},$$ for all $f\in \PW^1(2I)$.
	\end{lm}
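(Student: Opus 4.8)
The plan is to reduce Hardy's inequality for the interval $I=(-1/2,1/2)$ to the case of the half-line $\mathbb{R}_+$, which we have just established with constant $\pi$. The key observation is that $\PW^1(2I)=\PW^1((-1,1))$ sits inside the intersection of two shifted half-line Paley--Wiener spaces: if $f\in\PW^1((-1,1))$, then $\widehat f$ is supported in $(-1,1)$, so the modulated functions $g_-$ with $\widehat{g_-}(x)=\widehat f(x-1)$ and $g_+$ with $\widehat{g_+}(x)=\widehat f(x+1)$ have Fourier transforms supported in $(0,2)\subset\mathbb{R}_+$ and $(-2,0)\subset\mathbb{R}_-$ respectively. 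In particular $g_\pm\in\PW^1(\mathbb{R}_\pm)$ with $\|g_\pm\|_{L^1}=\|f\|_{L^1}$ (modulation does not change the $L^1$ norm). On the support $(-1,1)$ we have the pointwise comparison $\frac{1}{1-|x|}\le \frac{1}{1-x}+\frac{1}{1+x}=\frac{1}{|x-1|}+\frac{1}{|x+1|}$ (up to adjusting the region where each term dominates), so that
\begin{equation}\label{eq:split}
\int_{-1}^{1}\frac{|\widehat f(x)|}{1-|x|}\,dx\ \lesssim\ \int_{0}^{2}\frac{|\widehat f(x-1)|}{x}\,dx+\int_{-2}^{0}\frac{|\widehat f(x+1)|}{|x|}\,dx=\int_{0}^{\infty}\frac{|\widehat{g_-}(x)|}{x}\,dx+\int_{0}^{\infty}\frac{|\widehat{\tilde g_+}(x)|}{x}\,dx,
\end{equation}
where $\tilde g_+(x)=\overline{g_+(-x)}$ so that $\widehat{\tilde g_+}$ is supported in $(0,2)$ with $\|\tilde g_+\|_{L^1}=\|f\|_{L^1}$.

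Applying the half-line Hardy inequality to each of the two terms on the right of \eqref{eq:split} gives the bound $\lesssim \|g_-\|_{L^1}+\|\tilde g_+\|_{L^1}=2\|f\|_{L^1}$, which is the desired estimate with an explicit (though not optimal) constant. The only genuinely delicate point is the pointwise comparison of $\frac{1}{1-|x|}$ with the sum $\frac{1}{1-x}+\frac{1}{1+x}$: near $x=0$ both right-hand terms stay bounded while the left-hand side is bounded too, and near $x=\pm1$ exactly one of the two right-hand terms blows up at the same rate as the left side, so the inequality $\frac{1}{1-|x|}\le \frac{1}{1-x}+\frac{1}{1+x}$ in fact holds with constant $1$ on all of $(-1,1)$ — one checks $(1-x)+(1+x)=2$ while $(1-x)(1+x)=1-x^2\le 1-|x|$ for $|x|\le 1$, hence $\frac{1}{1-x}+\frac{1}{1+x}=\frac{2}{1-x^2}\ge\frac{2}{1-|x|}\cdot\frac{1-|x|}{1-x^2}$, and since $1-x^2=(1-|x|)(1+|x|)\le 2(1-|x|)$ we get $\frac{2}{1-x^2}\ge\frac{1}{1-|x|}$. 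So the constant can even be taken to be $2\pi$.

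I expect no serious obstacle here; the one thing to be careful about is the bookkeeping with the reflection $\tilde g_+$ needed to land in $\PW^1(\mathbb{R}_+)$ rather than $\PW^1(\mathbb{R}_-)$, and the fact that $\omega_I(x)=\chi_I\ast\chi_I(x)=1-|x|$ on $(-1,1)$ so that the statement is indeed an instance of Hardy's inequality for the set $I$ in the sense defined above. The technique of splitting $\omega_\Omega^{-1}$ into a sum of reciprocals of "simpler" $\omega$-functions associated to half-spaces — here the two half-lines — is precisely the idea that will be generalized in Lemma~\ref{decomp} to handle arbitrary polytopes, where $\omega_P^{-1}$ near a vertex will be controlled by a product (or sum) of one-dimensional factors coming from the edges meeting at that vertex.
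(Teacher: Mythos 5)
Your proof is correct, but it takes a genuinely different route from the paper's. The paper splits the \emph{function} rather than the weight: it chooses Schwartz functions with $|\widehat{\phi}_{-1}|+|\widehat{\phi}_{1}|=1$ on $(-1,1)$ and $\supp\widehat{\phi}_{\pm 1}$ bounded away from $\mp 1$, so that each piece $\widehat{f}\widehat{\phi}_{\pm1}$ only sees the singularity of $(1-|x|)^{-1}$ at one endpoint and lies in the Paley--Wiener space of a half-line; the half-line Hardy inequality applied to $f\ast\phi_{\pm1}$ together with Young's inequality then finishes the argument. You instead split the \emph{weight}, via the pointwise bound $\frac{1}{1-|x|}\le\frac{1}{1-x}+\frac{1}{1+x}$ on $(-1,1)$ (your verification of this is correct), and apply the half-line inequality directly to a modulation, respectively a modulation composed with a reflection, of $f$ itself. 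Both reductions to $\mathbb{R}_{+}$ are valid; yours is slightly cleaner in this one-dimensional instance and yields the explicit constant $2\pi$, whereas the paper's partition-of-unity device is the one that actually generalizes: it reappears in Lemmas~\ref{decomp} and~\ref{polyt}, where the weight cannot be split additively and one instead exploits the monotonicity $\omega_{\Omega\cap A_j}\le\omega_{\Omega}$. (So your closing remark slightly mischaracterizes Lemma~\ref{decomp}: the higher-dimensional argument is the function-splitting one, not the weight-splitting one.) One small bookkeeping slip, exactly where you flagged it: with the standard identity $\widehat{\overline{g(-\cdot)}}=\overline{\widehat{g}}$, your reflection $\tilde g_{+}(t)=\overline{g_{+}(-t)}$ does \emph{not} move the Fourier support from $(-2,0)$ to $(0,2)$; take $\tilde g_{+}(t)=g_{+}(-t)$ (or $\overline{g_{+}(t)}$) instead. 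This does not affect the validity of the argument.
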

	\begin{proof}
		Let $\phi_{-1},\phi_{1}$ two Schwarz functions such that $\supp\widehat{\phi}_{-1}\subset (-2,\frac{1}{2})$, $\supp\widehat{\phi}_{1}\subset (-\frac{1}{2},2)$ and $|\widehat{\phi}_{-1}(x)|+|\widehat{\phi}_{1}(x)|=1$ in $2I$. Then we can see that, for $f\in \PW^{1}(2I)$, 
		\begin{eqnarray}
			\int_{-1}^{1}\frac{|\widehat{f}(x)|}{1-|x|}dx&=& \int_{-1}^{1}\frac{|\widehat{f}(x)\widehat{\phi}_{1}(x)|}{1-|x|}dx+\int_{-1}^{1}\frac{|\widehat{f}(x)\widehat{\phi}_{-1}(x)|}{1-|x|}dx \nonumber \\
			&\lesssim& \int_{-\infty}^{1}\frac{|\widehat{f}(x)\widehat{\phi}_{1}(x)|}{1-x}dx+\int_{-1}^{\infty}\frac{|\widehat{f}(x)\widehat{\phi}_{-1}(x)|}{1+x}dx \nonumber \\
			&\lesssim& \|f\ast\phi_{1}\|_{L^{1}}+\|f\ast\phi_{-1}\|_{L^{1}}\lesssim \|f\|_{L^{1}}, \nonumber
		\end{eqnarray} 
		where the first inequality holds since $\supp\widehat{f}\widehat{\phi}_{1}\subset (-\frac{1}{2},1)$, $\supp\widehat{f}\widehat{\phi}_{-1}\subset (-1,\frac{1}{2})$ and the second inequality follow by Lemma \ref{Hardy}. 
	\end{proof}
	In order to lift this property to more dimensions, we will replicate the proof of H. Helson for the case of Hardy spaces on the multi--discs $\mathbb{T}^{n}$, \cite{MR2263964}, which was inspired by a technique used by A. Bonami \cite{MR283496}.
	\begin{lm}\label{prod}
		Let $\Omega_{1}\subset\mathbb{R}^{n},\Omega_{2}\subset\mathbb{R}^{m}$ such that Hardy's inequality holds. Then Hardy's inequality holds for $\Omega_{1}\times\Omega_{2}\subset \mathbb{R}^{n+m}$. Furthermore, $C(\Omega_1\times \Omega_2)=C(\Omega_1)C(\Omega_2)$, where by $C(A)$ we denote the best constant for Hardy's inequality for the set $A$.
	\end{lm}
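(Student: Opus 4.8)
The plan is to mimic the Helson--Bonami tensorization argument. Write a point of $\mathbb{R}^{n+m}$ as $(x,y)$ with $x\in\mathbb{R}^{n}$, $y\in\mathbb{R}^{m}$, and note the multiplicativity of the $\omega$ function under products: $\omega_{\Omega_{1}\times\Omega_{2}}(x,y)=\omega_{\Omega_{1}}(x)\,\omega_{\Omega_{2}}(y)$, since $\chi_{\Omega_{1}\times\Omega_{2}}=\chi_{\Omega_{1}}\otimes\chi_{\Omega_{2}}$ and convolution factors through the tensor product. So the left-hand side of Hardy's inequality for $\Omega_{1}\times\Omega_{2}$ is
$$\int_{\mathbb{R}^{m}}\int_{\mathbb{R}^{n}}\frac{|\widehat{f}(x,y)|}{\omega_{\Omega_{1}}(x)\,\omega_{\Omega_{2}}(y)}\,dx\,dy.$$
The idea is to integrate first in $x$: for each fixed $y$, the slice $x\mapsto \widehat{f}(x,y)$ is (up to the partial inverse Fourier transform in the second variable) the Fourier transform of a function in $\PW^{1}(2\Omega_{1})$, so Hardy's inequality for $\Omega_{1}$ should bound the inner integral by $C(\Omega_{1})$ times the $L^{1}(\mathbb{R}^{n}_{\xi})$-norm of that slice. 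Then integrating the result in $y$ and applying Hardy's inequality for $\Omega_{2}$ should finish the argument, yielding the constant $C(\Omega_{1})C(\Omega_{2})$.

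The step that needs care — and that is really the content of the Bonami technique — is that one cannot directly apply the one-variable inequality to a ``slice'' of $f$, because fixing $y$ in $\widehat{f}$ does not correspond to fixing the transformed variable. Instead I would set $g_{y}$ to be the function on $\mathbb{R}^{n}$ whose Fourier transform is $\widehat{f}(\cdot,y)$, i.e. apply the inverse Fourier transform in the first group of variables only; then $g_{y}\in\PW^{1}(2\Omega_{1})$ for a.e. $y$, and Hardy's inequality for $\Omega_{1}$ gives $\int_{\mathbb{R}^{n}}|\widehat{f}(x,y)|/\omega_{\Omega_{1}}(x)\,dx\le C(\Omega_{1})\|g_{y}\|_{L^{1}(\mathbb{R}^{n})}$. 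The key observation is then that the function $h$ on $\mathbb{R}^{m}$ defined by $h(y)=\|g_{y}\|_{L^{1}(\mathbb{R}^{n})}$, when one examines its Fourier transform in $y$, lies (after the appropriate interpretation) in $\PW^{1}(2\Omega_{2})$ with $\|h\|_{L^{1}(\mathbb{R}^{m})}=\|f\|_{L^{1}(\mathbb{R}^{n+m})}$ — here one uses that $\|g_{y}\|_{L^{1}}=\int|g_{y}(\xi)|\,d\xi$ and Fubini. Applying Hardy's inequality for $\Omega_{2}$ to $h$ then produces $C(\Omega_{1})C(\Omega_{2})\|f\|_{L^{1}}$, as desired.

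I expect the main obstacle to be making the ``slice'' argument rigorous, since $h(y)=\|g_y\|_{L^1}$ is defined by an absolute value and is not literally the Fourier transform of an $L^1$ function supported in $2\Omega_2$; one has to follow Helson and Bonami in writing $|g_y(\xi)|$ as $u(\xi,y)\overline{v(\xi,y)}$ with $|u|=|v|=|g_y|^{1/2}$ having Fourier supports that add up correctly, or else argue by a density/approximation and duality argument reducing to smooth compactly supported data where everything is classical. Once the support bookkeeping is arranged so that the relevant auxiliary functions genuinely belong to the right Paley--Wiener classes, both applications of the hypothesis are immediate and the constant comes out as the product. For the sharpness of the constant, testing on product functions $f=f_{1}\otimes f_{2}$ with $f_{i}$ near-extremal for $\Omega_{i}$ shows $C(\Omega_1\times\Omega_2)\ge C(\Omega_1)C(\Omega_2)$, giving equality.
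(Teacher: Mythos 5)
Your overall strategy --- multiplicativity of $\omega$ under products followed by two successive applications of the one-variable inequality via partial Fourier transforms --- is exactly the paper's (Helson--Bonami) argument, and your first step is correct: for a.e.\ $y$ the slice $g_y$ with $\widehat{g_y}=\widehat f(\cdot,y)$ lies in $\PW^1(2\Omega_1)$, giving $\int_{\mathbb{R}^n}|\widehat f(x,y)|\,\omega_{\Omega_1}(x)^{-1}dx\le C(\Omega_1)\|g_y\|_{L^1(\mathbb{R}^n)}$. The gap is in the second application. You propose to apply Hardy's inequality for $\Omega_2$ to $h(y)=\|g_y\|_{L^1(\mathbb{R}^n)}$, and you rightly note that $h$ is not in $\PW^1(2\Omega_2)$ because of the absolute value; but the repair you suggest --- a factorization $|g_y(\xi)|=u\,\overline v$ with Fourier supports that ``add up correctly'' --- is not available for a general $\Omega_2$ (Riesz-type weak factorization in $\PW^1$ fails beyond the half-line and is essentially of the same strength as the Nehari-type statements the paper is studying), and the ``density and duality'' alternative is left unspecified. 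As written, the second half of the upper bound does not close.

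The fix is much simpler and needs no factorization. Writing $F_m$ for the Fourier transform in the last $m$ variables only, you have $g_y=F_mf(\cdot,y)$, so after the first application the bound is $C(\Omega_1)\int_{\mathbb{R}^m}\omega_{\Omega_2}(y)^{-1}\int_{\mathbb{R}^n}|F_mf(\xi,y)|\,d\xi\,dy$. Exchange the order of integration by Tonelli (the integrand is nonnegative) and, for each fixed $\xi$, apply Hardy's inequality for $\Omega_2$ to the genuine slice $f(\xi,\cdot)\in L^1(\mathbb{R}^m)$: its Fourier transform is $F_mf(\xi,\cdot)$, which is supported in $2\Omega_2$ because $F_mf=F_n^{-1}\widehat f$ and the inverse transform in the first group of variables does not enlarge the support in the second group. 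This yields $C(\Omega_1)C(\Omega_2)\|f\|_{L^1}$ directly, which is exactly how the paper concludes. Your lower bound via tensor products $f_1\otimes f_2$ with near-extremal factors is correct and matches the paper.
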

	\begin{proof}
		First, by definition we can notice that for $x\in \mathbb{R}^{n}$, $y\in\mathbb{R}^{m}$, $\omega_{\Omega_{1}\times\Omega_{2}}(x,y)=\omega_{\Omega_{1}}(x)\omega_{\Omega_{2}}(y)$. Let $F_n$ be the Fourier transform of a function in $L^{1}(\mathbb{R}^{n+m})$ on the first $n$ variables, and $F_m$ on the other $m$ variables, that is 
		$$F_n f(x,y)=\int_{\mathbb{R}^n}f(\xi,y)e^{-2\pi i \langle \xi,x\rangle}d\xi\text{   and   } F_m f(x,y)=\int_{\mathbb{R}^m}f(x,\xi)e^{-2\pi i \langle \xi,y\rangle}d\xi.$$
		Then the classical Fourier in $\mathbb{R}^{n+m}$ is $\widehat{f}(x,y)=F_n F_m f(x,y)=F_m F_n f(x,y)$. Using Hardy's inequality for each variable and Fubini, we can see that for $f\in\PW^{1}(2(\Omega_{1}\times\Omega_{2}))$,
		\begin{eqnarray}
			\int_{\mathbb{R}^{n+m}}\dfrac{|\widehat{f}(x,y)|}{\omega_{\Omega_1\times\Omega_2}(x,y)}dxdy&=& \int_{\mathbb{R}^{m}}\dfrac{1}{\omega_{\Omega_2}(y)}\int_{\mathbb{R}^{n}}\dfrac{|F_nF_mf(x,y)|}{\omega_{\Omega_1}(x)}dxdy \nonumber \\ &\leq&  C(\Omega_1)\int_{\mathbb{R}^{m}}\dfrac{1}{\omega_{\Omega_2}(y)}\int_{\mathbb{R}^{n}}|F_mf(x,y)|dx dy\nonumber  \\
			&=& C(\Omega_1) \int_{\mathbb{R}^{n}}\int_{\mathbb{R}^{m}}\dfrac{|F_mf(x,y)|}{\omega_{\Omega_2}(y)}dy dx  \nonumber \\
			&\leq& C(\Omega_1)C(\Omega_2) \int_{\mathbb{R}^{n}}\int_{\mathbb{R}^{m}}| f(x,y)|dy dx . \nonumber 
		\end{eqnarray}
		Therefore we have proven that $C(\Omega_1\times\Omega_2)\leq C(\Omega_1)C(\Omega_2)$. For the other inequality, let $f\in \PW^1(2\Omega_1)$, $g\in \PW^1(2\Omega_2)$ and $h(x,y)=f(x)g(y)$. Then, since $\widehat{h}(x,y)=\widehat{f}(x)\widehat{g}(y)$ we have that $h\in \PW^1(2(\Omega_1\times\Omega_2))$ and thus 
		\begin{eqnarray}
			C(\Omega_1\times\Omega_2)&\geq& \frac{1}{\|h\|_{L^1}}	\int_{\mathbb{R}^{n+m}}\dfrac{|\widehat{h}(x,y)|}{\omega_{\Omega_1\times\Omega_2}(x,y)}dxdy \nonumber \\
			&=& \frac{1}{\|f\|_{L^1}\|g\|_{L^1}}	\int_{\mathbb{R}^{n}}\dfrac{|\widehat{f}(x)|}{\omega_{\Omega_1}(x)}dx \int_{\mathbb{R}^{m}}\dfrac{|\widehat{g}(y)|}{\omega_{\Omega_2}(y)}dy. \nonumber 
		\end{eqnarray}
		The inequality follows by taking supremum for all such $f,g$.
	\end{proof}
	It is evident that the above lemma also holds for a finite product of sets satisfying Hardy's inequality, thus we derive that Hardy's inequality holds for $\mathbb{R}^{n}_{+}$ and $I^{n}=I\times I\times...\times I$. A simple observation shows that Hardy's inequality also holds for $T(\mathbb{R}^{n}_{+})$ and $T(I)$ for all $T$ affine automorphisms of $\mathbb{R}^{n}$ for the same constants as Hardy's inequality for $\mathbb{R}_{+}^n$ and $I^n$ respectively. We will use this fact to prove Theorem \ref{poly} for simple polytopes in $\mathbb{R}^n$, i.e. polytopes such that every vertex belongs to exactly $n$ facets, by localizing near each vertex of the simple polytope. To achieve this, we will need the following technique.
	\begin{lm}\label{decomp}
		Let $\Omega$ be a bounded convex subset of $\mathbb{R}^{n}$ and suppose there exist open and bounded sets $A_1,..., A_N$ that cover $\overline{\Omega}$, such that for $\Omega\cap A_{j}$ Hardy's inequality holds for all $j=1,...,N$. Then Hardy's inequality holds for $\Omega$.
	\end{lm}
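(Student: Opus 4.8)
The plan is to localise $\widehat f$ on the Fourier side by a smooth partition of unity adapted to the cover $\{A_j\}$, in the spirit of the proof of Lemma~\ref{interval}, so that each localised piece lands in the Paley--Wiener space of one of the convex sets $\Omega\cap A_j$, where Hardy's inequality is available by hypothesis. The elementary point that makes this work is the monotonicity $\omega_{\Omega\cap A_j}\le\omega_\Omega$ (since $\chi_{\Omega\cap A_j}\le\chi_\Omega$ and the autoconvolution of nonnegative functions is monotone), which goes in the favourable direction: shrinking $\Omega$ only enlarges $\omega^{-1}$.

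First I would fix the partition of unity, independently of $f$. Since $\overline\Omega\subset\bigcup_{j}A_j$ and the $A_j$ are open, the dilates $2A_j$ form a finite open cover of the compact set $2\overline\Omega$. Choose $\widehat{\phi}_j\in C_c^\infty(\mathbb{R}^n)$ with $\supp\widehat{\phi}_j\subset 2A_j$ and $\sum_{j=1}^N\widehat{\phi}_j\equiv 1$ on a neighbourhood of $2\overline\Omega$. Because this choice does not depend on $f$, the inverse Fourier transforms $\phi_j$ are fixed Schwartz functions and the numbers $\|\phi_j\|_{L^1}$ are fixed finite constants; this is exactly what keeps the final constant uniform over $\PW^1(2\Omega)$.

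Now, for $f\in\PW^1(2\Omega)$ I would write $\widehat f=\sum_{j}\widehat f\,\widehat{\phi}_j$, which is legitimate since $\supp\widehat f\subset 2\Omega$ lies in the region where $\sum_j\widehat{\phi}_j=1$. Setting $F_j=f\ast\phi_j$, the convolution theorem gives $\widehat{F_j}=\widehat f\,\widehat{\phi}_j$; Young's inequality gives $\|F_j\|_{L^1}\le\|\phi_j\|_{L^1}\|f\|_{L^1}$; and $\supp\widehat{F_j}\subset\supp\widehat f\cap\supp\widehat{\phi}_j\subset 2\Omega\cap 2A_j=2(\Omega\cap A_j)$, so $F_j\in\PW^1(2(\Omega\cap A_j))$. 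Since $\omega_{\Omega\cap A_j}\le\omega_\Omega$ and $\widehat{F_j}$ vanishes off $2(\Omega\cap A_j)$ (where $\omega_{\Omega\cap A_j}>0$),
\[
\int_{\mathbb{R}^n}\frac{|\widehat f(x)|}{\omega_\Omega(x)}\,dx
\le\sum_{j=1}^N\int_{\mathbb{R}^n}\frac{|\widehat{F_j}(x)|}{\omega_\Omega(x)}\,dx
\le\sum_{j=1}^N\int_{\mathbb{R}^n}\frac{|\widehat{F_j}(x)|}{\omega_{\Omega\cap A_j}(x)}\,dx
\le\sum_{j=1}^N C(\Omega\cap A_j)\,\|F_j\|_{L^1},
\]
and the right-hand side is at most $\bigl(\sum_{j}C(\Omega\cap A_j)\|\phi_j\|_{L^1}\bigr)\|f\|_{L^1}$, which is Hardy's inequality for $\Omega$ (with $C(\Omega)\le\sum_j C(\Omega\cap A_j)\|\phi_j\|_{L^1}$).

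The one genuinely delicate point is the choice highlighted above: one must take the partition of unity subordinate to the dilated ambient sets $2A_j$, and \emph{not} attempt to subordinate it directly to the open cover $\{2(\Omega\cap A_j)\}$ of $2\Omega$. The latter cannot be realised by functions with compact support in those sets, because every boundary point of $2\Omega$ fails to lie in any $2(\Omega\cap A_j)$; localising by $2A_j$ sidesteps this entirely, and the inclusion $\supp\widehat f\cap 2A_j\subset 2(\Omega\cap A_j)$ then does the support bookkeeping automatically. One should also note throughout that $\Omega\cap A_j$ is convex (so that its Paley--Wiener space and $\omega$-function are defined and $2(\Omega\cap A_j)=(\Omega\cap A_j)+(\Omega\cap A_j)$); this is implicit in the hypothesis.
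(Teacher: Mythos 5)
Your proposal is correct and follows essentially the same route as the paper: a smooth Fourier-side partition of unity subordinate to the dilated cover $\{2A_j\}$, the monotonicity $\omega_{\Omega\cap A_j}\leq\omega_{\Omega}$, Hardy's inequality on each piece $\Omega\cap A_j$, and Young's inequality to control $\|f\ast\phi_j\|_{L^1}$. The only cosmetic difference is that the paper normalises $\sum_j|\widehat{\phi}_j|=1$ on $2\Omega$ so the first step is an equality rather than a triangle-inequality bound; your remark that convexity of $\Omega\cap A_j$ is implicit in the hypothesis is also consistent with how the lemma is applied.
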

	\begin{proof}
		Since the sets $A_j$ are a finite open cover of $\overline{\Omega}$, we can find Schwarz functions $\phi_j$ such that $\supp\widehat{\phi}_{j}\subset 2A_j$ and $\sum_j |\widehat{\phi}_{j}|=1$ in $2\Omega$. 
		We notice that for $A\subset B$, $\omega_{A}\leq \omega_{B}$. Therefore, using Young's inequality we can compute
		\begin{eqnarray}
			\int_{\mathbb{R}^{n}} \frac{|\widehat{f}(x)|}{\omega_{\Omega}(x)}dx&=& \sum_{j=1}^N\int_{\mathbb{R}^{n}} \frac{|\widehat{f}(x)\widehat{\phi}_{j}(x)|}{\omega_{\Omega}(x)}dx\leq \sum_{j=1}^N\int_{\mathbb{R}^{n}} \frac{|\widehat{f}(x)\widehat{\phi}_{j}(x)|}{\omega_{\Omega\cap A_j}(x)}dx \nonumber \\
			&\lesssim&  \sum_{j=1}^N \|f\ast\phi_j\|_{L^{1}} \lesssim \|f\|_{L^{1}}, \nonumber  
		\end{eqnarray}
	\end{proof}
	For simple polytopes, we can show a uniform behavior of Hardy's inequality depending on the number of vertices.
	\begin{lm}\label{polyt}
		Let $P$ be a simple polytope in $\mathbb{R}^{n}$ of $N$ vertices. Then there exists a constant $C(n,N)>0$ such that
		$$\int_{\mathbb{R}^{n}}\dfrac{|\widehat{f}(x)|}{\omega_{P}(x)}dx\leq C(n,N)\|f\|_{L^{1}},$$ for all $f\in\PW^1(2P)$.
	\end{lm}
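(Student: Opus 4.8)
The plan is to deduce Hardy's inequality for a simple polytope $P$ from Lemma \ref{decomp}. I would cover $\overline{P}$ by finitely many open affine parallelepipeds $A_{1},\dots,A_{M}$ for which each intersection $P\cap A_{j}$ is an affine image of the open cube $I^{n}=I\times\dots\times I$. Hardy's inequality holds for $I^{n}$ with constant $C(I)^{n}$ by Lemma \ref{interval} and Lemma \ref{prod}, hence for every affine image of $I^{n}$ with the same constant by the observation that Hardy's inequality is invariant under affine automorphisms; so Lemma \ref{decomp} applies and gives Hardy's inequality for $P$. For the quantitative version one re-runs the proof of Lemma \ref{decomp} keeping track of constants: its output is $\sum_{j}C(P\cap A_{j})\,\|\phi_{j}\|_{L^{1}}\le M\,C(I)^{n}\max_{j}\|\phi_{j}\|_{L^{1}}$, so it suffices to bound $M$ and $\max_{j}\|\phi_{j}\|_{L^{1}}$ by quantities depending only on $n$ and $N$.

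The local picture, where simplicity is used, is routine. At a vertex $v$ the $n$ facets through $v$ pairwise intersect in edges, so the $n$ edge directions $e_{1}^{v},\dots,e_{n}^{v}$ at $v$ form a basis and the tangent cone of $P$ at $v$ is the simplicial cone $v+\cone(e_{1}^{v},\dots,e_{n}^{v})$, with which $P$ coincides in a neighbourhood of $v$; more generally, near a relative interior point of a $k$-face $F$ the polytope locally splits as $\mathbb{R}^{k}$ along $F$ times a simplicial cone in the complementary $\mathbb{R}^{n-k}$. In each case, intersecting with a sufficiently small parallelepiped whose edge directions are adapted to this local product yields exactly an affine copy of $I^{k}\times I^{n-k}=I^{n}$: an orthant meets an aligned box in a box, a convex sector meets an adapted parallelogram in a parallelogram, and $\mathbb{R}^{k}$ meets a box in a box. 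By compactness of $\overline{P}$, finitely many such good parallelepipeds cover it.

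To make $M$ depend only on $n$ and $N$ I would read the cover off the face lattice rather than from a bare compactness argument, using that a simple $n$-polytope with $N$ vertices has at most $N\binom{n}{j}/(j+1)$ faces of dimension $j$, hence at most $2^{n}N$ faces and at most $n!\,N$ flags in all (each vertex lies in exactly $\binom{n}{j}$ faces of dimension $j$ and in exactly $n!$ flags, and a $j$-face has at least $j+1$ vertices). One can then assemble the cover with a number of good parallelepipeds bounded by a function of $n$ and $N$: attaching a bounded number of them to each flag, or, more economically, one adapted parallelepiped per vertex enlarged as far as $P$ permits, in the spirit of the decomposition of a triangle into three parallelograms. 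To control $\|\phi_{j}\|_{L^{1}}$, fix once and for all a model bump $\eta_{0}$ on $\mathbb{R}^{n}$ equal to $1$ on a cube and supported in a slightly larger concentric cube, and take $\widehat{\phi}_{j}$ to be $\eta_{0}$ precomposed with the affine bijection that normalizes $2A_{j}$ onto the support cube of $\eta_{0}$; a change of variables shows $\|\phi_{j}\|_{L^{1}}$ equals the $L^{1}$-norm of the inverse Fourier transform of $\eta_{0}$ for every $j$, a dimensional constant. Arranging the cover so that the slightly shrunk concentric parallelepipeds $A_{j}'$ still cover $\overline{P}$ and taking $\widehat{\phi}_{j}\equiv1$ on $2A_{j}'$ guarantees $\sum_{j}|\widehat{\phi}_{j}|\ge1$ on $2\overline{P}$, which is what the proof of Lemma \ref{decomp} needs.

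The main obstacle is precisely this last point: producing a good parallelepiped cover whose cardinality is governed by $n$ and $N$ alone and not by the metric shape of $P$, which is what forces one to extract the cover from the combinatorics of the face lattice. A secondary technical nuisance is that Lemma \ref{decomp} wants genuinely open sets, so the good parallelepipeds must be enlarged slightly without losing the property that $P\cap A_{j}$ is an affine cube; this is handled by pushing the enlargement outside $P$ across the facets through the relevant vertex and by passing to a subcover of bounded overlap.
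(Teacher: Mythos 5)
Your proposal is correct and follows essentially the same route as the paper's proof: cover $P$ by affine parallelepipeds whose intersections with $P$ are again parallelepipeds, transport one fixed bump function by the corresponding affine maps so that $\|\phi_{j}\|_{L^{1}}$ is a dimensional constant, and combine the decomposition argument of Lemma \ref{decomp} with the affine-invariant Hardy inequality for parallelepipeds coming from Lemmas \ref{interval} and \ref{prod}. The only real difference is that you spell out, via the local simplicial-cone structure and the face-lattice count for simple polytopes, why the number of pieces can be taken to depend only on $n$ and $N$ --- a point the paper's proof asserts rather tersely.
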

\begin{proof}
	Let us fix $\phi$ a Schwarz function such that $\widehat{\phi}(x)=0$ outside $C_n=(-1,1)^{n}$, $\widehat{\phi}(x)=1$ in $I^n=(-\frac{1}{2},\frac{1}{2})^{n}$ and $0\leq \widehat{\phi}(x)\leq 1$. Let $C(n,N)>0$ and $T_j$ affine maps, $j=1,...,C(n,N)$ such that $P\subset \cup_j T_j(I^n)$ and $P_j\cap P$ are parallilepipeds, where $P_j=T_j(C_n)$. To see this, it is obvious that it can be done for the cube, and thus by affine transformations, the number of which depends on the number of vertices, we can transfer it to $P$. Let us also define  $\widehat{\phi}_j(x)=\widehat{\phi}\circ (2T_j)^{-1}$. The result follows as in the proof of Lemma \ref{decomp} using the fact that $\|\phi_j\|_{L^{1}}=\|\phi\|_{L^{1}}$. Since by construction $\sum_{j=1}^{C(n,N)}|\widehat{\phi}_j(x)|\geq 1$ for all $x\in 2P$, 
	$$\int_{\mathbb{R}^{n}}\dfrac{|\widehat{f}(x)|}{\omega_{P}(x)}dx\leq \sum_{j=1}^{C(n,N)}\int_{\mathbb{R}^{n}}\dfrac{|\widehat{f}(x)\widehat{\phi}_j(x)|}{\omega_{P}(x)}dx\leq \sum_{j=1}^{C(n,N)}\int_{\mathbb{R}^{n}}\dfrac{|\widehat{f}(x)\widehat{\phi}_j(x)|}{\omega_{P\cap P_j}(x)}dx.$$
	By Lemmas \ref{interval} and \ref{prod} Hardy's inequality holds for all parallilepipeds for the same constant. Since by construction $P\cap P_j$ are parallilepipeds, we get that
	$$\int_{\mathbb{R}^{n}}\dfrac{|\widehat{f}(x)|}{\omega_{P}(x)}dx\lesssim \sum_{j=1}^{C(n,N)} \|f\ast\phi_j\|_{L^{1}}\lesssim \sum_{j=1}^{C(n,N)} \|f\|_{L^1}\|\phi_j\|_{L^{1}} \approx C(n,N)\|\phi\|_{L^{1}} \|f\|_{L^1}.$$
The proof is complete.
\end{proof}
The following lemma provides a way to extend our results from simple polytopes to general polytopes.
\begin{lm}\label{approxim}
	Let $\Omega$ be a bounded convex subset of $\mathbb{R}^{n}$ and suppose that we can find an increasing family of convex sets $P_j\subset \Omega$, $j\geq 0$ such that $\cup_j P_j=\Omega$ and Hardy's inequality holds for every $P_j$ for some uniformly bounded constants $C(P_j)$. Then Hardy's inequality holds for $\Omega$ for the constant $C=\liminf_{j\to \infty}C(P_j)$.
\end{lm}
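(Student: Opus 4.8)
The plan is to run the dilation-and-Fatou argument from the second half of the proof of Lemma~\ref{neh2}, adding one ingredient: among the sets $P_j$ whose double contains the (dilated) spectrum of a given $f$, one must be able to single out one whose constant $C(P_j)$ is arbitrarily close to $\liminf_{j\to\infty}C(P_j)$. Accordingly, fix $\varepsilon>0$ and set $J_\varepsilon=\{\,j:\ C(P_j)<\liminf_{i\to\infty}C(P_i)+\varepsilon\,\}$, which is an infinite set by the definition of the lower limit; the goal is to prove $\int_{\mathbb{R}^n}|\widehat{f}(x)|\,\omega_\Omega(x)^{-1}\,dx\le(\liminf_{i}C(P_i)+\varepsilon)\|f\|_{L^1}$ for all $f\in\PW^1(2\Omega)$, and then let $\varepsilon\to0$.

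First I would set up the dilations exactly as in Lemma~\ref{neh2}. We may assume $\Omega$ has nonempty interior (otherwise restrict to its affine hull); fix $z$ in the interior of $\Omega$, and for $r\in(0,1)$ put $\Omega_r=r\Omega+(1-r)z$ and $\widehat{f}_r(x)=\widehat{f}\big(\tfrac{x-2(1-r)z}{r}\big)$. Then $\supp\widehat{f}_r\subset 2\overline{\Omega_r}$, $\|f_r\|_{L^1}=\|f\|_{L^1}$, and $\widehat{f}_r\to\widehat{f}$ pointwise as $r\to1$ by continuity of $\widehat{f}$. Because $z$ is interior, $\overline{\Omega_r}=r\overline\Omega+(1-r)z$ is a compact subset of $\operatorname{int}(\Omega)$, and by convexity $\operatorname{int}(\Omega)=\bigcup_j\operatorname{int}(P_j)$; hence the increasing open cover $\{\operatorname{int}(P_j)\}_j$ of the compact set $\overline{\Omega_r}$ has a single member $\operatorname{int}(P_{j_0})\supset\overline{\Omega_r}$, and enlarging $j_0$ to some $j(r)\in J_\varepsilon$ — possible since $J_\varepsilon$ is unbounded and the family is increasing — keeps $\overline{\Omega_r}\subset\operatorname{int}(P_{j(r)})$.

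Then comes the core estimate. Since $\supp\widehat{f}_r\subset 2\overline{\Omega_r}\subset 2\operatorname{int}(P_{j(r)})\subset 2P_{j(r)}$, we have $f_r\in\PW^1(2P_{j(r)})$, so Hardy's inequality for $P_{j(r)}$ gives
$$\int_{\mathbb{R}^n}\frac{|\widehat{f}_r(x)|}{\omega_{P_{j(r)}}(x)}\,dx\ \le\ C(P_{j(r)})\,\|f_r\|_{L^1}\ <\ \Big(\liminf_{i\to\infty}C(P_i)+\varepsilon\Big)\|f\|_{L^1}.$$
On $\supp\widehat{f}_r$ one has $0<\omega_{P_{j(r)}}\le\omega_\Omega$: the bound $\omega_{P_{j(r)}}\le\omega_\Omega$ holds everywhere because $P_{j(r)}\subset\Omega$ gives $\omega_{P_{j(r)}}=\chi_{P_{j(r)}}\ast\chi_{P_{j(r)}}\le\chi_\Omega\ast\chi_\Omega=\omega_\Omega$, while $\omega_{P_{j(r)}}>0$ on the open set $2\operatorname{int}(P_{j(r)})$, which contains $\supp\widehat{f}_r$. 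Therefore $\int|\widehat{f}_r|\,\omega_\Omega^{-1}\le\int|\widehat{f}_r|\,\omega_{P_{j(r)}}^{-1}<(\liminf_i C(P_i)+\varepsilon)\|f\|_{L^1}$ for every $r\in(0,1)$, and applying Fatou's lemma along any sequence $r_k\to1$ to the nonnegative functions $|\widehat{f}_{r_k}|\,\omega_\Omega^{-1}\to|\widehat{f}|\,\omega_\Omega^{-1}$ yields $\int|\widehat{f}|\,\omega_\Omega^{-1}\le(\liminf_i C(P_i)+\varepsilon)\|f\|_{L^1}$. Letting $\varepsilon\to0$ finishes the proof.

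The argument is soft, so I do not anticipate a genuine obstacle; the two places that call for a little care are (i) that the member of the cover containing $\overline{\Omega_r}$ can be chosen with index in $J_\varepsilon$ — which is exactly where one uses both that the family is increasing and that $J_\varepsilon$ is unbounded — and (ii) that $\omega_{P_{j(r)}}$ stays bounded below on $\supp\widehat{f}_r$, i.e. that $\overline{\Omega_r}$ is compactly contained in $\operatorname{int}(P_{j(r)})$, which is what makes replacing $\omega_{P_{j(r)}}$ by the larger $\omega_\Omega$ legitimate; both are guaranteed by taking $z$ in the interior of $\Omega$.
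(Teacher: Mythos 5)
Your proposal is correct and follows essentially the same route as the paper: dilate $f$ to $f_r$ supported in $2(r\Omega+(1-r)z)$, pick an index $j(r)$ with $r\Omega+(1-r)z\subset P_{j(r)}$, use the monotonicity $\omega_{P_{j(r)}}\le\omega_\Omega$ together with Hardy's inequality on $P_{j(r)}$, and conclude by Fatou's lemma and $\|f_r\|_{L^1}=\|f\|_{L^1}$. The only difference is cosmetic: you extract the constant $\liminf_j C(P_j)$ via the $\varepsilon$-set $J_\varepsilon$, while the paper simply chooses the subsequence $P_{j_r}$ realizing the lower limit directly.
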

\begin{proof}
	Let us set for $f\in\PW^1(2\Omega)$, $\widehat{f}_r(x)=\widehat{f}(\frac{x-2(1-r)z}{r})$, where $z\in \Omega$ and $r\in (0,1)$. Then $f_r\in \PW^1(2(r\Omega+(1-r)z))$. By convexity boundedness and monotonicity, for every $r\in (0,1)$ we can find $j_r\geq 0$ such that $r\Omega+(1-r)z\subset P_{j_r}$. For $A\subset B$, it is evident that $\omega_{A}\leq \omega_{B}$, and thus by Fatou's lemma and the fact that $\|f_r\|_{L^1}=\|f\|_{L^1}$,
	\begin{eqnarray} \int_{\mathbb{R}^{n}}\dfrac{|\widehat{f}(x)|}{\omega_{\Omega}(x)}dx&\leq&\liminf_{r\to 1}\int_{\mathbb{R}^{n}}\dfrac{|\widehat{f}_r(x)|}{\omega_{\Omega}(x)}dx\leq\liminf_{r\to 1}\int_{\mathbb{R}^{n}}\dfrac{|\widehat{f}_r(x)|}{\omega_{P_{j_r}}(x)}dx \nonumber \\ &\leq &\liminf_{r\to 1} C(P_{j_r})\|f_r\|_{L^1}= \liminf_{r\to 1} C(P_{j_r})\|f\|_{L^1}. \nonumber 
		\end{eqnarray} By the monotonicity of $P_j$, we can choose $P_{j_r}$ such that $\liminf_{r\to 1} C(P_{j_r})=\liminf_{j\to \infty} C(P_{j})$.
\end{proof}
	 Now we are finally able to present the complete proof of Theorem \ref{poly}. \\
	\\ \medspace 
	\textbf{Proof of Theorem \ref{poly}.} First, we notice that Hardy's inequality always implies Nehari's theorem for $p=2$ by Lemma \ref{neh2}. Thus it suffices to prove the validity of Hardy's inequality for any polytope. Let us fix a polytope $P$ in $\mathbb{R}^{n}$. By Lemmas \ref{polyt} and \ref{approxim} it suffices to find an increasing family of simple polytopes $P_j$ with uniformly bounded number of vertices such that $\cup_j P_j=P$. Instead of looking at this problem we are going to look at its polar. For an open convex set $\Omega$, let $\Omega^{\ast}$ be its polar body defined as, $$\Omega^{\ast}=\{y\in\mathbb{R}^{n}:\langle y,x\rangle < 1 \text{ for all }x\in \Omega\}.$$ Some important properties of the polar body are the following:
	\begin{enumerate}
		\item For $A$ open and convex with $0\in A$, $(A^{\ast})^{\ast}=A$.
		\item For $A,B$ open and convex with $A\subset B$, $B^{\ast}\subset A^{\ast}$.
		\item For $I$ an index set and $A_i$, $i\in I$ open and convex, $(\cup_{i\in I}A_i)^{\ast}=(\cap_{i\in I}A_i^{\ast})^{\circ}$, where $A^\circ$ denotes the interior of a set.
		\item The polar set of a polytope is a polytope.
		\item A polytope that contains $0$ is simple if and only if its polar is simplicial, that is all of its facets are the convex hull of $n$ affinely independent vectors. 
		\item If $P\subset\mathbb{R}^{n}$ is a polytope that contains $0$, then there is a bijection between the $l$ dimensional faces of $P$ and the $n-l-1$ dimensional faces of $P^{\ast}$, $l=0,1,...,n-1$.
	\end{enumerate} 
    For information about polar bodies the reader can refer to \cite[Chapter 14]{MR2335496}. Thus, it suffices to prove that for every polytope $P$ containing $0$, there exists a decreasing sequence of simplicial polytopes $P_j$ with uniformly bounded number of facets, such that $\cap_j P_j = P$. Rather than requiring a uniformly bounded number of facets, we will construct $P_j$ with a uniformly bounded number of vertices, which implies a uniform bound on the number of facets. Let us now fix a polytope $P$ that contains $0$. For a vertex $x$ of $P$, let us set $S(x)=\{\lambda(y-x):\lambda>0,y\in P\}$. $S(x)$ is called the support cone of $P$ at $x$. Let $x_1,...,x_k$ be the vertices of $P$. 
    \\ \textbf{Step 1.}
     For every choice of points $y_{x_i}\in x_i-S(x_i)$, $i=1,...,k$, close enough to $x_i$, we will show that their convex hull strictly contains $P$, that is their boundaries are disjoint. Let $\lambda_i\in (0,\frac{1}{k})$, $y_i\in P$ such that $y_{x_i}=x_i-\lambda_i(y_i-x_i)$ and let, since $y_i\in P$, $\mu_{i,j}\in (0,1)$ such that $y_i=\sum_{j=1}^k\mu_{i,j}x_j,$ $\sum_{j=1}^k\mu_{i,j}=1$. We want to show that every $x_i$ belongs to the interior of the convex hull of $y_{x_i}$. Without loss of generality we only show the case of $x_1$. We compute,
    \begin{eqnarray}
    	\sum_{i=1}^{k}\rho_i y_{x_i}&=&\sum_{i=1}^{k}\rho_i ((1+\lambda_i)x_i-\lambda_i y_i)=\sum_{i=1}^{k}\rho_i ((1+\lambda_i)x_i- \sum_{j=1}^k\lambda_i\mu_{i,j}x_j) \nonumber \\
    	&=& \sum_{i=1}^{k}\rho_i (1+\lambda_i)x_i - \sum_{i=1}^k\sum_{j=1}^k\rho_i\lambda_i\mu_{i,j}x_j \nonumber \\ &=& \sum_{i=1}^{k}\left(\rho_i (1+\lambda_i)-\sum_{j=1}^k\rho_j\lambda_j\mu_{j,i} \right)x_i. \nonumber 
    \end{eqnarray}
Let $\Lambda$ be the diagonal matrix $\Lambda=\diag(\lambda_i)_{1\leq i\leq k}$, $M=(\mu_{i,j})_{1\leq i,j\leq k}$, $\mathrm{P}=(\rho_i)_{1\leq i\leq k}$ be a column matrix and $E$ be the column matrix with first entry $1$ and zero elsewhere. We would like to have $x_1=\sum_{i=1}^{k}\rho_i y_{x_i}$ for some $\rho_i > 0$ with $\sum_{i=1}^{k}\rho_i =1$.
We will show that this can be achieved pointwise, i.e.
\begin{equation}\label{matrix} \rho_i (1+\lambda_i)-\sum_{j=1}^k\rho_j\lambda_j\mu_{j,i}=\delta_{i,1}.
\end{equation}
 This can be written in matrix language as
$$(I+\Lambda-M^{T}\Lambda)\mathrm{P}=E,$$ 
where $I$ is the identity $k\times k$ matrix.
First we observe that this matrix is invertible since,
$$\|\Lambda-M^{T}\Lambda\|\leq \|\Lambda\|\|I-M^{T}\|\leq k\max_{1\leq i\leq k}\lambda_i<1,$$ where the inequality $\|I-M^{T}\|\leq k$ holds since for any matrix $A=(a_{i,j})_{1\leq i,j\leq k}$, $\|A\|\leq k\max_{i,j}|a_{i,j}|$.
Now it suffices to show that the unique solution 
$$\mathrm{P}=(I+\Lambda-M^{T}\Lambda)^{-1}E,$$ has positive entries that sum to $1$. The fact that it sums to $1$ follows by adding equation \eqref{matrix} for all $i$, giving $$\sum_{i=1}^{k}\rho_i=\sum_{i=1}^{k}\left(\rho_i (1+\lambda_i)-\sum_{j=1}^k\rho_j\lambda_j\mu_{j,i} \right)=1.$$ 
Now for the positivity, let $I$ be the set of indices such that $\rho_i \geq 0$. Since $\sum_{i=1}^k\rho_i=1$, $I\neq\emptyset$. Summing again equation \eqref{matrix} for all $i\in I$ we get that
\begin{equation}\label{negsum} \sum_{i\in I}\left(\rho_i (1+\lambda_i)-\sum_{j=1}^k\rho_j\lambda_j\mu_{j,i} \right)=\delta,
	\end{equation} where $\delta=1$ if $1\in I$ and $0$ otherwise. We can observe that
\begin{align*}
	 \sum_{i\in I}\rho_i \lambda_i-\sum_{j=1}^k&\rho_j\lambda_j(\sum_{i\in I}\mu_{j,i}) 
	\\ &= \sum_{i\in I}\rho_i \lambda_i-\sum_{j\in I}\rho_j\lambda_j(\sum_{i\in I}\mu_{j,i})-\sum_{j\notin I}\rho_j\lambda_j(\sum_{i\in I}\mu_{j,i})  \\
	&= \sum_{i\in I}\rho_i \lambda_i(1-\sum_{j\in I}\mu_{i,j})+\sum_{j\notin I}(-\rho_j)\lambda_j(\sum_{i\in I}\mu_{j,i})\geq 0,  
\end{align*}
and thus, equation \eqref{negsum} gives that 
$$\delta\geq \sum_{i\in I}\rho_i\geq \sum_{i=1}^k \rho_i =1.$$ Therefore, $\delta=1$ and $\sum_{i\in I}\rho_i= \sum_{i=1}^k \rho_i$, which implies that $\rho_i\geq 0$ for all $i=1,...,k$. Now suppose that for some $i$, $\rho_i=0.$ Then by equation \eqref{matrix}, $$\delta_{i,1}=-\sum_{j=1}^k \rho_j \lambda_j \mu_{j,i}\leq 0.$$ This implies that $\rho_j=0$ for all $j=1,...,k$, contradiction.
\\ \textbf{Step 2.} For every $\epsilon>0$, we can find vectors $y_{x_i}\in x_i-S(x_i)$ with $|y_{x_i}-x_i|<\epsilon$, $i=1,...,k$, such that every collection of $n+1$ of them are affinely independent. We will use induction. Suppose that we have defined $y_{x_1},...,y_{x_l}$, such that each $n+1$ of them are affinely independent, and the distance of $y_{x_i}$ is less than $\epsilon$ from $x_i$. Define $\mathscr{F}$ as the union of affine hulls of all subsets of $n$ vectors chosen from $\{y_{x_1},..., y_{x_l}\}$. Then this is a finite union of at most $n-1$ dimensional sets, thus of measure zero. Since the set $(x_{l+1}-S(x_{l+1}))\cap B(x_{l+1},\epsilon)$ is open, it has positive measure, thus there must be an element that belongs to $(x_{l+1}-S(x_{l+1}))\cap B(x_{l+1},\epsilon)$ but not in $\mathscr{F}$. We choose this as $y_{l+1}$.
\\ \textbf{Step 3.} Let $\epsilon_0>0$ such that $B(0,\epsilon_0)\cap S(x_i)\subset \{\lambda(y-x_i):\lambda\in (0,\frac{1}{k}),y\in P\}$ for all $i=1,...,k$. Thus, for $\epsilon<\epsilon_0$, by steps $1$ and $2$, we can find $y_{x_i}\in x_i-S(x_i)$, $i=1,...,k$ such that $|y_{x_i}-x_i|<\epsilon$, every collection of $n+1$ of them are affinely independent and the convex hull $Q_{\epsilon}=\conv\{y_{x_i},i=1,...,k\}$ strictly contains $P$. $Q_{\epsilon}$ by definition is simplicial. Since $|y_{x_i}-x_i|<\epsilon$, $y_{x_i}\in P+B(0,\epsilon)$, and thus $Q_{\epsilon}\subset P+B(0,\epsilon)$.
Let $\epsilon_1<\epsilon_0$. Since $P$ is strictly contained in $Q_{\epsilon_1}$, the distance of their boundaries is strictly positive, thus for $\epsilon_2<\min(\dist(\partial P,\partial Q_{\epsilon_1}),\frac{\epsilon_0}{2})$ we have that $Q_{\epsilon_2}\subset P+B(0,\epsilon_2)\subset Q_{\epsilon_1}$. By induction we can construct $\epsilon_j<\frac{\epsilon_0}{j}$, such that $Q_{\epsilon_j}$ satisfy $$P\subset\bigcap_{j\in\mathbb{Z}_{+}}Q_{\epsilon_j}\subset \bigcap_{j\in\mathbb{Z}_{+}} (P+B(0,\epsilon_j))\subset \bigcap_{j\in\mathbb{Z}_{+}} (P+B(0,\epsilon_0/j))=P.$$
The proof is complete.
 \qed	
    \\ \newline \par In three dimensions, the argument simplifies significantly. In $\mathbb{R}^3$, every edge of a polytope lies in exactly two facets. Therefore, given a polytope $P$ and one of its vertices $x$, we can truncate $x$ using a hyperplane near it, resulting in a fixed number of new simple vertices. By performing this operation at all vertices, we obtain a simple polytope. Moreover, by choosing hyperplanes that cut closer and closer to the original vertices, we generate a family of simple polytopes with uniformly bounded number of vertices that approximates $P$.
    
    As an example, let $P = T(\alpha, \beta)$ be the pyramid in $\mathbb{R}^3$ defined in Section \ref{sec2}. If we truncate the apex $(0,0,\beta)$ using the horizontal hyperplane $\{(x, y, z) : z = z_0\}$, we obtain the truncated set
    $$
    Q_{z_0} = \left\{ (x, y, z) \in \mathbb{R}^3 : |x|, |y| < \alpha - \frac{\alpha}{\beta}z, \; 0 < z < z_0 \right\}.
    $$
    This set retains the original base vertices $(\pm\alpha, \pm\alpha, 0)$, and replaces the apex $(0, 0, \beta)$ with four new simple vertices:
    $$
    \left(\pm\left(\alpha - \frac{\alpha}{\beta}z_0\right), \pm\left(\alpha - \frac{\alpha}{\beta}z_0\right), z_0\right).
    $$
    Hence, $Q_{\beta-1/j}$ forms a family of polytopes with only simple vertices. In this case, the original polytope had a single non-simple vertex; if more were present, we would perform additional truncations accordingly.
    \par In the proof of Theorem \ref{poly}, we showed that in the polar case, the approximating polytopes have the same number of vertices as the original polytope. Since polarity interchanges vertices and facets, by Lemmas \ref{polyt} and \ref{approxim} we can conclude that for any polytope $P\subset\mathbb{R}^{n}$ of $N$ facets, there exists a constant $C(n,N)>0$ such that 
    $$\int_{\mathbb{R}^n}\dfrac{|\widehat{f}(x)|}{\omega_{P}(x)}dx\leq C(n,N)\|f\|_{L^1},$$ for all $f\in \PW^1(2P)$.
	
	\section{Adjusted Hardy's inequality}\label{sec4}
	Let us consider the following family of inequalities, which are Hardy's inequality with a power adjustment on the weight, that is, for $d\in\mathbb{R}$,
	\begin{eqnarray}\label{11} \int_{\mathbb{R}^{n}}\dfrac{|\widehat{f}(x)|}{\omega_{\Omega}^d(x)}dx\leq C\|f\|_{L^{1}}, \quad f\in\PW^{1}(2\Omega).
	\end{eqnarray} 
	Theorem \ref{poly} says that inequality \eqref{11} holds for $d=1$, and thus for all $d\leq 1$, for every polytope. For some $d$ small enough (depending on the dimension), when $\Omega$ is bounded we can easily prove that such an inequality holds, using the following result of Schmuckenschl\"{a}ger \cite[Theorem 2]{MR1194970} which describes the asymptotic behavior of the measure of the sets $\{x\in 2\Omega:\omega_{\Omega}(x)<t\}$.
	\begin{lm}[Schmuckenschl\"{a}ger]\label{schm} For a bounded convex $\Omega\subset\mathbb{R}^{n}$, there is $C(\Omega)\geq 0$ such that 
     $$\lim_{t\to 0}\dfrac{m(\{x\in 2\Omega:\omega_{\Omega}(x)<t\})}{t^{\frac{2}{n+1}}}=C_n.$$
	\end{lm}
	Since the function $$f(t)=\dfrac{m(\{x\in 2\Omega:\omega_{\Omega}(x)<t\})}{t^{\frac{2}{n+1}}},$$ is continuous in $t\in (0,\infty)$ with $\lim_{t\to 0}f(t)<\infty$ and $\lim_{t\to \infty}f(t)=0$, we get that there is $K(\Omega)>0$ such that $m(\{x\in 2\Omega:\omega_{\Omega}(x)<t\})\leq K(\Omega)t^{\frac{2}{n+1}}$ for all $t>0$. Thus we have the following lemma.
	\begin{lm}\label{hold}
		Let $\Omega$ be a convex bounded set. Then, inequality \eqref{11} holds for all $d<\frac{2}{n+1}$.
	\end{lm}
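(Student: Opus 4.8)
The plan is to reduce the inequality \eqref{11} for small $d$ to a plain $L^1$–$L^\infty$ pairing estimate, using Schmuckenschl\"ager's asymptotics (Lemma \ref{schm}) to control the size of the weight $\omega_\Omega^{-d}$. First I would note the trivial bound $|\widehat f(x)|\leq \|f\|_{L^1}$ for $f\in\PW^1(2\Omega)$, which reduces matters to showing that $\omega_\Omega^{-d}\in L^1(2\Omega)$, i.e.
\begin{equation*}
\int_{2\Omega}\frac{dx}{\omega_\Omega^d(x)}<\infty.
\end{equation*}
Indeed, if this integral is finite then
\begin{equation*}
\int_{\mathbb{R}^n}\frac{|\widehat f(x)|}{\omega_\Omega^d(x)}\,dx\leq \|\widehat f\|_{L^\infty}\int_{2\Omega}\frac{dx}{\omega_\Omega^d(x)}\leq \Bigl(\int_{2\Omega}\frac{dx}{\omega_\Omega^d(x)}\Bigr)\|f\|_{L^1},
\end{equation*}
which is exactly \eqref{11} with $C=\int_{2\Omega}\omega_\Omega^{-d}$. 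Note this also handles the polytope case automatically, since $\omega_\Omega$ is bounded and bounded below away from $\partial(2\Omega)$ — but to give a unified statement one may simply invoke Lemma \ref{schm} (the polytope range $d\le 1$ is already covered by Theorem \ref{poly} and is in any case stronger than $d<\tfrac{2}{n+1}$).

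The heart of the matter is therefore the finiteness of $\int_{2\Omega}\omega_\Omega^{-d}$, which follows from the layer-cake (distribution function) formula. Writing $E_t=\{x\in 2\Omega:\omega_\Omega(x)<t\}$, and using that $\omega_\Omega$ is bounded above by some $M<\infty$ on the bounded set $2\Omega$, we have for $d>0$
\begin{equation*}
\int_{2\Omega}\frac{dx}{\omega_\Omega^d(x)}=\int_0^\infty m\bigl(\{x\in 2\Omega:\omega_\Omega^{-d}(x)>s\}\bigr)\,ds=\int_0^\infty m\bigl(E_{s^{-1/d}}\bigr)\,ds.
\end{equation*}
Splitting the $s$–integral at $s=M^{-d}$, the part $s\leq M^{-d}$ contributes at most $M^{-d}\,m(2\Omega)<\infty$, while for $s>M^{-d}$ we substitute $t=s^{-1/d}\in(0,1)$ and apply Lemma \ref{schm} to get $m(E_t)\lesssim t^{2/(n+1)}$, so that
\begin{equation*}
\int_{M^{-d}}^\infty m(E_{s^{-1/d}})\,ds\lesssim \int_{M^{-d}}^\infty s^{-2/(d(n+1))}\,ds,
\end{equation*}
which converges precisely when $\tfrac{2}{d(n+1)}>1$, i.e. $d<\tfrac{2}{n+1}$. (For $d\leq 0$ the weight $\omega_\Omega^{-d}$ is bounded on $2\Omega$ and there is nothing to prove.) The case $d=0$ is the trivial inequality $\|\widehat f\|_{L^1(2\Omega)}\le m(2\Omega)\|f\|_{L^1}$.

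The main — indeed only — obstacle is bookkeeping near the two ends of the range: near $\partial(2\Omega)$, where $\omega_\Omega$ vanishes and Lemma \ref{schm} supplies exactly the needed decay rate for $E_t$ as $t\to 0^+$; and the harmless truncation at $t\approx 1$ coming from the upper bound on $\omega_\Omega$. If $\Omega$ is a polytope, Lemma \ref{schm} does not apply, but then $\omega_\Omega$ is piecewise polynomial and vanishes only to finite order on $\partial(2\Omega)$, so $\omega_\Omega^{-d}$ is still integrable for $d$ in a (larger) range including $d<\tfrac{2}{n+1}$; alternatively, as noted, Theorem \ref{poly} already gives the stronger conclusion. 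Since no sharpness is claimed here, the estimate above suffices and the proof is complete.
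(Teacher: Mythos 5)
Your proposal is correct and follows essentially the same route as the paper: reduce via $|\widehat f|\le\|f\|_{L^1}$ to the integrability of $\omega_\Omega^{-d}$ over $2\Omega$, then use the distribution-function formula together with Schmuckenschl\"ager's estimate (Lemma \ref{schm}) to see that the resulting integral converges exactly when $d<\tfrac{2}{n+1}$, handling polytopes separately via Theorem \ref{poly}. The only differences are cosmetic (your layer-cake parametrization versus the paper's substitution $t\mapsto\omega_\Omega^{-1}$), so nothing further is needed.
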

	\begin{proof}
		If $\Omega$ is a polytope then it follows by Theorem \ref{poly}. We will prove that $\omega_{\Omega}^{-d}\in L^{1}(2\Omega)$ whenever $d<\frac{2}{n+1}$. Since $\omega_{\Omega}$ is bounded by $m(\Omega)$, we can assume that $d>0$. Let, without loss of generality $\omega_{\Omega}(x)\leq 1$. By Schmuckenschl\"{a}ger's theorem (Lemma \ref{schm}) we can compute
			$$\int_{2\Omega}\omega_{\Omega}^{-d}(x)dx= d\int_{1}^{\infty}t^{d-1}m(\{y\in 2\Omega:\omega_{\Omega}^{-1}(y)>t\})dt \leq K(\Omega)\int_{1}^{\infty}t^{d-1-\frac{2}{n+1}}dt,$$
	which converges whenever $d<\frac{2}{n+1}$. Thus, for this case, we have that 
		$$\int_{\mathbb{R}^n}\dfrac{|\widehat{f}(x)|}{\omega_{\Omega}^d(x)}dx\leq \|\widehat{f}\|_{L^{\infty}}\int_{2\Omega}\omega_{\Omega}^{-d}(x)dx\leq \|f\|_{L^{1}}\int_{2\Omega}\omega_{\Omega}^{-d}(x)dx.$$ The proof is complete.
	\end{proof}
	Finally, we turn our attention to the negation of inequality \eqref{11} and we give the following counter argument which depends on the existence of homogeneous sets. The following lemma is Theorem \ref{theo3} (3).
\begin{lm}\label{John}
	Let $\Omega$ be a convex subset of $\mathbb{R}^{n}$ free of lines. Then Hardy's inequality fails whenever $d>1$, and if furthermore $\Omega$ is unbounded, then it fails whenever $d\neq1$.
\end{lm}
\begin{proof}
	Let $t\in\omega_{\Omega}(2\Omega)$, and let $x_t\in2\Omega$ such that $\omega_{\Omega}(x_t)=t$. Since $\Omega\cap (x_t-\Omega)$ is a convex set of measure $t$, by John's Theorem \cite{John1948} we can find an ellipse $A_t\subset \Omega\cap (x_t-\Omega)$ such that $m(A_t)\approx t$. Also, for $y\in A_t$, there is $z\in \Omega$ with $x_t=y+z$, and thus by concavity of $\omega_{\Omega}^{\frac{1}{n}}$ (\cite[Lemma 2.1]{bampouras2024besovspacesschattenclass}), $\omega_{\Omega}(y)\leq 2^{n}\omega_{\Omega}(x_t) =2^{n}t$. Let us consider for our counterexample, $\widehat{\phi}$ Schwarz function supported in the unit ball $B(0,1)$, $\widehat{\phi}=1$ in $\frac{1}{2} B(0,1)$ and $\widehat{\phi}_t=\widehat{\phi}\circ T_t$, where $T_t$ is the affine transform that maps $A_t$ onto $B(0,1)$. Applying $\phi_t$ on Hardy's inequality, since $\|\phi_t\|_{L^1}= \|\phi\|_{L^1}$,
	$$t^{1-d}\lesssim 1.$$
	This completes the proof.
\end{proof}
Theorem \ref{theo3} is the combination of Theorem \ref{poly} and Lemmas \ref{hold} and \ref{John}.

	\bibliographystyle{plain}
	\bibliography{Bampouras3}

@article {MR82945,
	AUTHOR = {Nehari, Zeev},
	TITLE = {On bounded bilinear forms},
	JOURNAL = {Ann. of Math. (2)},
	FJOURNAL = {Annals of Mathematics. Second Series},
	VOLUME = {65},
	YEAR = {1957},
	PAGES = {153--162},
	ISSN = {0003-486X},
	MRCLASS = {15.0X},
	MRNUMBER = {82945},
	MRREVIEWER = {W. K. Hayman},
	DOI = {10.2307/1969670},
	URL = {https://doi.org/10.2307/1969670},
}

@article {MR4502777,
	AUTHOR = {Brevig, Ole Fredrik and Perfekt, Karl-Mikael},
	TITLE = {The {N}ehari problem for the {P}aley-{W}iener space of a disc},
	JOURNAL = {J. Geom. Anal.},
	FJOURNAL = {Journal of Geometric Analysis},
	VOLUME = {33},
	YEAR = {2023},
	NUMBER = {1},
	PAGES = {Paper No. 16, 7},
	ISSN = {1050-6926},
	MRCLASS = {47B35 (42B35)},
	MRNUMBER = {4502777},
	DOI = {10.1007/s12220-022-01063-2},
	URL = {https://doi.org/10.1007/s12220-022-01063-2},
}

@article {MR4227573,
	AUTHOR = {Carlsson, Marcus and Perfekt, Karl-Mikael},
	TITLE = {Nehari's theorem for convex domain {H}ankel and {T}oeplitz
		operators in several variables},
	JOURNAL = {Int. Math. Res. Not. IMRN},
	FJOURNAL = {International Mathematics Research Notices. IMRN},
	YEAR = {2021},
	PAGES = {3331--3361},
	ISSN = {1073-7928},
	MRCLASS = {47B35 (32A35 42B10)},
	MRNUMBER = {4227573},
	MRREVIEWER = {Maria T. Nowak},
	DOI = {10.1093/imrn/rnz193},
	URL = {https://doi.org/10.1093/imrn/rnz193},
}

@incollection {MR953994,
	AUTHOR = {Peng, Li Zhong},
	TITLE = {Hankel operators on the {P}aley-{W}iener space in disk},
	BOOKTITLE = {Miniconferences on harmonic analysis and operator algebras
		({C}anberra, 1987)},
	SERIES = {Proc. Centre Math. Anal. Austral. Nat. Univ.},
	VOLUME = {16},
	PAGES = {173--180},
	PUBLISHER = {Austral. Nat. Univ., Canberra},
	YEAR = {1988},
	MRCLASS = {47B35 (46E35 47B10)},
	MRNUMBER = {953994},
	MRREVIEWER = {Richard Rochberg},
}

@article {MR1001657,
	AUTHOR = {Peng, Li Zhong},
	TITLE = {Hankel operators on the {P}aley-{W}iener space in {$\bold
			R^d$}},
	JOURNAL = {Integral Equations Operator Theory},
	FJOURNAL = {Integral Equations and Operator Theory},
	VOLUME = {12},
	YEAR = {1989},
	NUMBER = {4},
	PAGES = {567--591},
	ISSN = {0378-620X},
	MRCLASS = {47B35 (46E35 46M35 47B10)},
	MRNUMBER = {1001657},
	MRREVIEWER = {Richard Rochberg},
	DOI = {10.1007/BF01199459},
	URL = {https://doi.org/10.1007/BF01199459},
}

@article {MR108684,
	AUTHOR = {Hartman, Philip},
	TITLE = {On completely continuous {H}ankel matrices},
	JOURNAL = {Proc. Amer. Math. Soc.},
	FJOURNAL = {Proceedings of the American Mathematical Society},
	VOLUME = {9},
	YEAR = {1958},
	PAGES = {862--866},
	ISSN = {0002-9939,1088-6826},
	MRCLASS = {42.00 (46.00)},
	MRNUMBER = {108684},
	MRREVIEWER = {Z.\ Nehari},
	DOI = {10.2307/2033318},
	URL = {https://doi.org/10.2307/2033318},
}

@book {MR0237286,
	AUTHOR = {Kronecker, Leopold},
	TITLE = {Leopold {K}ronecker's {W}erke. {B}\"{a}nde {I}--{V}},
	NOTE = {Herausgegeben auf Veranlassung der K\"{o}niglich Preussischen
		Akademie der Wissenschaften von K. Hensel},
	PUBLISHER = {Chelsea Publishing Co., New York},
	YEAR = {1968},
	PAGES = {Vol. I: ix+483 pp. (1 plate); Vol. II: viii+540 pp.; Vol. III,
		Part I: vii+473 pp.; Part II: iii+215 pp. (Parts I and II
		bound as one); Vol. IV: x+508 pp.; Vol. V: x+527},
	MRCLASS = {01.60},
	MRNUMBER = {0237286},
}

@book {MR1949210,
	AUTHOR = {Peller, Vladimir V.},
	TITLE = {Hankel operators and their applications},
	SERIES = {Springer Monographs in Mathematics},
	PUBLISHER = {Springer-Verlag, New York},
	YEAR = {2003},
	PAGES = {xvi+784},
	ISBN = {0-387-95548-8},
	MRCLASS = {47B35 (30D55 30E05 46E15 47A45 47A57 47N70 93B28)},
	MRNUMBER = {1949210},
	MRREVIEWER = {Harry Dym},
	DOI = {10.1007/978-0-387-21681-2},
	URL = {https://doi.org/10.1007/978-0-387-21681-2},
}

@article {MR878246,
	AUTHOR = {Rochberg, Richard},
	TITLE = {Toeplitz and {H}ankel operators on the {P}aley-{W}iener space},
	JOURNAL = {Integral Equations Operator Theory},
	FJOURNAL = {Integral Equations and Operator Theory},
	VOLUME = {10},
	YEAR = {1987},
	NUMBER = {2},
	PAGES = {187--235},
	ISSN = {0378-620X},
	MRCLASS = {47B35 (47B10)},
	MRNUMBER = {878246},
	MRREVIEWER = {Douglas N. Clark},
	DOI = {10.1007/BF01199078},
	URL = {https://doi.org/10.1007/BF01199078},
}

@article {MR674875,
	AUTHOR = {Rochberg, Richard},
	TITLE = {Trace ideal criteria for {H}ankel operators and commutators},
	JOURNAL = {Indiana Univ. Math. J.},
	FJOURNAL = {Indiana University Mathematics Journal},
	VOLUME = {31},
	YEAR = {1982},
	NUMBER = {6},
	PAGES = {913--925},
	ISSN = {0022-2518},
	MRCLASS = {47B35},
	MRNUMBER = {674875},
	MRREVIEWER = {V. V. Peller},
	DOI = {10.1512/iumj.1982.31.31062},
	URL = {https://doi.org/10.1512/iumj.1982.31.31062},
}

@article {MR1194970,
	AUTHOR = {Schmuckenschl\"{a}ger, M.},
	TITLE = {The distribution function of the convolution square of a
		convex symmetric body in {${\bf R}^n$}},
	JOURNAL = {Israel J. Math.},
	FJOURNAL = {Israel Journal of Mathematics},
	VOLUME = {78},
	YEAR = {1992},
	NUMBER = {2-3},
	PAGES = {309--334},
	ISSN = {0021-2172},
	MRCLASS = {52A38},
	MRNUMBER = {1194970},
	MRREVIEWER = {Keith Ball},
	DOI = {10.1007/BF02808061},
	URL = {https://doi.org/10.1007/BF02808061},
}

@article {MR4700194,
	AUTHOR = {Bampouras, Konstantinos},
	TITLE = {On the failure of the {N}ehari theorem for {P}aley-{W}iener
	spaces},
	JOURNAL = {J. Math. Anal. Appl.},
	FJOURNAL = {Journal of Mathematical Analysis and Applications},
	VOLUME = {536},
	YEAR = {2024},
	NUMBER = {1},
	PAGES = {Paper No. 128165, 9},
	ISSN = {0022-247X,1096-0813},
	MRCLASS = {47B35 (42B10)},
	MRNUMBER = {4700194},
	DOI = {10.1016/j.jmaa.2024.128165},
	URL = {https://doi.org/10.1016/j.jmaa.2024.128165},
}

@book {MR1451142,
	AUTHOR = {Paley, Raymond E. A. C. and Wiener, Norbert},
	TITLE = {Fourier transforms in the complex domain},
	SERIES = {American Mathematical Society Colloquium Publications},
	VOLUME = {19},
	NOTE = {Reprint of the 1934 original},
	PUBLISHER = {American Mathematical Society, Providence, RI},
	YEAR = {1987},
	PAGES = {x+184},
	ISBN = {0-8218-1019-7},
	MRCLASS = {01A75 (30-03 42-02)},
	MRNUMBER = {1451142},
	DOI = {10.1090/coll/019},
	URL = {https://doi.org/10.1090/coll/019},
}

@book {MR1892647,
AUTHOR = {Nikolski, Nikolai K.},
TITLE = {Operators, functions, and systems: an easy reading. {V}ol. 1},
SERIES = {Mathematical Surveys and Monographs},
VOLUME = {92},
NOTE = {Hardy, Hankel, and Toeplitz,
Translated from the French by Andreas Hartmann},
PUBLISHER = {American Mathematical Society, Providence, RI},
YEAR = {2002},
PAGES = {xiv+461},
ISBN = {0-8218-1083-9},
MRCLASS = {47-02 (30D55 30E05 30H05 46Exx 47N70 93B28)},
MRNUMBER = {1864396},
MRREVIEWER = {Harry\ Dym},
}

@article {MR2263964,
	AUTHOR = {Helson, Henry},
	TITLE = {Hankel forms and sums of random variables},
	JOURNAL = {Studia Math.},
	FJOURNAL = {Studia Mathematica},
	VOLUME = {176},
	YEAR = {2006},
	NUMBER = {1},
	PAGES = {85--92},
	ISSN = {0039-3223},
	MRCLASS = {43A15 (15A63 30B50 30D55 47B35)},
	MRNUMBER = {2263964},
	MRREVIEWER = {V. V. Peller},
	DOI = {10.4064/sm176-1-6},
	URL = {https://doi.org/10.4064/sm176-1-6},
}

@book {MR3099268,
	AUTHOR = {Queff\'{e}lec, Herv\'{e} and Queff\'{e}lec, Martine},
	TITLE = {Diophantine approximation and {D}irichlet series},
	SERIES = {Harish-Chandra Research Institute Lecture Notes},
	VOLUME = {2},
	PUBLISHER = {Hindustan Book Agency, New Delhi},
	YEAR = {2013},
	PAGES = {xii+232},
	ISBN = {978-93-80250-53-3},
	MRCLASS = {11M06 (11Jxx 30B50 30K15)},
	MRNUMBER = {3099268},
	MRREVIEWER = {Angel V. Kumchev},
}

@article {MR2993031,
	AUTHOR = {Ortega-Cerd\`a, Joaquim and Seip, Kristian},
	TITLE = {A lower bound in {N}ehari's theorem on the polydisc},
	JOURNAL = {J. Anal. Math.},
	FJOURNAL = {Journal d'Analyse Math\'{e}matique},
	VOLUME = {118},
	YEAR = {2012},
	NUMBER = {1},
	PAGES = {339--342},
	ISSN = {0021-7670},
	MRCLASS = {43A07 (30H05 30H10 32A70 42B30)},
	MRNUMBER = {2993031},
	MRREVIEWER = {Stamatis Koumandos},
	DOI = {10.1007/s11854-012-0038-y},
	URL = {https://doi.org/10.1007/s11854-012-0038-y},
}

@article {MR3422087,
	AUTHOR = {Brevig, Ole Fredrik and Perfekt, Karl-Mikael},
	TITLE = {Failure of {N}ehari's theorem for multiplicative {H}ankel
	forms in {S}chatten classes},
	JOURNAL = {Studia Math.},
	FJOURNAL = {Studia Mathematica},
	VOLUME = {228},
	YEAR = {2015},
	NUMBER = {2},
	PAGES = {101--108},
	ISSN = {0039-3223},
	MRCLASS = {47B35 (30B50)},
	MRNUMBER = {3422087},
	MRREVIEWER = {Ruishen Qian},
	DOI = {10.4064/sm228-2-1},
	URL = {https://doi.org/10.4064/sm228-2-1},
}

@article {MR500308,
	AUTHOR = {Russo, Bernard},
	TITLE = {On the {H}ausdorff-{Y}oung theorem for integral operators},
	JOURNAL = {Pacific J. Math.},
	FJOURNAL = {Pacific Journal of Mathematics},
	VOLUME = {68},
	YEAR = {1977},
	NUMBER = {1},
	PAGES = {241--253},
	ISSN = {0030-8730},
	MRCLASS = {47G05 (43A30)},
	MRNUMBER = {500308},
	MRREVIEWER = {Toader Jucan},
	URL = {http://projecteuclid.org/euclid.pjm/1102817381},
}

@misc{bampouras2024besovspacesschattenclass,
	title={Besov spaces and Schatten class Hankel operators for Hardy and Paley--Wiener spaces in higher dimensions}, 
	author={Konstantinos Bampouras and Karl-Mikael Perfekt},
	year={2025},
	eprint={2409.04184},
	archivePrefix={arXiv},
	primaryClass={math.FA},
	url={https://arxiv.org/abs/2409.04184}, 
	}

@article {MR1984405,
	AUTHOR = {Vukoti\'{c}, Dragan},
	TITLE = {The isoperimetric inequality and a theorem of {H}ardy and
	{L}ittlewood},
	JOURNAL = {Amer. Math. Monthly},
	FJOURNAL = {American Mathematical Monthly},
	VOLUME = {110},
	YEAR = {2003},
	NUMBER = {6},
	PAGES = {532--536},
	ISSN = {0002-9890},
	MRCLASS = {30D55 (46E15 51M16)},
	MRNUMBER = {1984405},
	DOI = {10.2307/3647909},
	URL = {https://doi.org/10.2307/3647909},
}

@article {MR283496,
	AUTHOR = {Bonami, Aline},
	TITLE = {\'Etude des coefficients de {F}ourier des fonctions de
	{$L\sp{p}(G)$}},
	JOURNAL = {Ann. Inst. Fourier (Grenoble)},
	FJOURNAL = {Universit\'e{} de Grenoble. Annales de l'Institut Fourier},
	VOLUME = {20},
	YEAR = {1970},
	PAGES = {335--402},
	ISSN = {0373-0956,1777-5310},
	MRCLASS = {42.50},
	MRNUMBER = {283496},
	MRREVIEWER = {S.\ Izumi},
	URL = {http://www.numdam.org/item?id=AIF_1970__20_2_335_0},
}

@article {MR766959,
	AUTHOR = {Chang, Sun-Yung A. and Fefferman, Robert},
	TITLE = {Some recent developments in {F}ourier analysis and
	{$H^p$}-theory on product domains},
	JOURNAL = {Bull. Amer. Math. Soc. (N.S.)},
	FJOURNAL = {American Mathematical Society. Bulletin. New Series},
	VOLUME = {12},
	YEAR = {1985},
	NUMBER = {1},
	PAGES = {1--43},
	ISSN = {0273-0979},
	MRCLASS = {42B30 (42-02 46E15 46M35)},
	MRNUMBER = {766959},
	MRREVIEWER = {Burglind J\"{o}ricke},
	DOI = {10.1090/S0273-0979-1985-15291-7},
	URL = {https://doi.org/10.1090/S0273-0979-1985-15291-7},
}

@article {MR1785283,
	AUTHOR = {Ferguson, Sarah H. and Sadosky, Cora},
	TITLE = {Characterizations of bounded mean oscillation on the polydisk
	in terms of {H}ankel operators and {C}arleson measures},
	JOURNAL = {J. Anal. Math.},
	FJOURNAL = {Journal d'Analyse Math\'{e}matique},
	VOLUME = {81},
	YEAR = {2000},
	PAGES = {239--267},
	ISSN = {0021-7670},
	MRCLASS = {47B35 (42B35 46E15)},
	MRNUMBER = {1785283},
	MRREVIEWER = {Song Ying Li},
	DOI = {10.1007/BF02788991},
	URL = {https://doi.org/10.1007/BF02788991},
}

@article {MR1961195,
AUTHOR = {Ferguson, Sarah H. and Lacey, Michael T.},
TITLE = {A characterization of product {BMO} by commutators},
JOURNAL = {Acta Math.},
FJOURNAL = {Acta Mathematica},
VOLUME = {189},
YEAR = {2002},
NUMBER = {2},
PAGES = {143--160},
ISSN = {0001-5962},
MRCLASS = {42B25 (46E15 47B47 47G10)},
MRNUMBER = {1961195},
MRREVIEWER = {Sandra Pott},
DOI = {10.1007/BF02392840},
URL = {https://doi.org/10.1007/BF02392840},
}

@article {MR1580823,
	AUTHOR = {Schur, J.},
	TITLE = {Bemerkungen zur {T}heorie der beschr\"{a}nkten {B}ilinearformen
	mit unendlich vielen {V}er\"{a}nderlichen},
	JOURNAL = {J. Reine Angew. Math.},
	FJOURNAL = {Journal f\"{u}r die Reine und Angewandte Mathematik. [Crelle's
	Journal]},
	VOLUME = {140},
	YEAR = {1911},
	PAGES = {1--28},
	ISSN = {0075-4102},
	MRCLASS = {DML},
	MRNUMBER = {1580823},
	DOI = {10.1515/crll.1911.140.1},
	URL = {https://doi.org/10.1515/crll.1911.140.1},
}

@book {MR2335496,
	AUTHOR = {Gruber, Peter M.},
	TITLE = {Convex and discrete geometry},
	SERIES = {Grundlehren der mathematischen Wissenschaften [Fundamental
	Principles of Mathematical Sciences]},
	VOLUME = {336},
	PUBLISHER = {Springer, Berlin},
	YEAR = {2007},
	PAGES = {xiv+578},
	ISBN = {978-3-540-71132-2},
	MRCLASS = {52-02 (11-02 46B20 49-02 49J53 90C25)},
	MRNUMBER = {2335496},
	MRREVIEWER = {Aleksandr Koldobsky},
}

@incollection{John1948,
	author    = {Fritz John},
	title     = {Extremum problems with inequalities as subsidiary conditions},
	booktitle = {Studies and Essays Presented to R. Courant on his 60th Birthday},
	publisher = {Interscience},
	address   = {New York},
	year      = {1948},
	pages     = {187--204}
}
	
\end{document}